\newcommand{\N}{{\mathbb N}}
\newcommand{\R}{\mathbb{R}}
\newcommand{\Z}{{\mathbb{Z}}}
\newcommand{\Hilbert}{{\mathcal{H}}}  
\newtheorem{theorem}{Theorem}
\newtheorem{proposition}[theorem]{Proposition}
\newtheorem{definition}[theorem]{Definition}
\newtheorem{remark}[theorem]{Remark}
\newtheorem{question}[theorem]{Question}
\newtheorem{lemma}[theorem]{Lemma}
\newtheorem{corollary}[theorem]{Corollary}
\newcommand{\norm}[1]{\| #1\|}
\newcommand{\op}[1]{\!\!\mathop{\rm ~#1}\nolimits}
\newcommand{\Om}{\Omega}
\newcommand{\om}{\omega}
\renewcommand{\geq}{\geqslant}
\renewcommand{\leq}{\leqslant}
\newcommand{\abs}[1]{\left|#1\right|}
\newcommand{\OP}{{\rm Op}_\hbar}
\newcommand{\RM}{\mathbb{R}}
\newcommand{\h}{\hbar}
\newcommand{\f}{{\vec{f}}}
\begin{document}

\title[Spectral limits of semiclassical commuting operators]{Spectral
  limits of semiclassical commuting self\--adjoint operators}

\author{\'Alvaro Pelayo, \,\,\, San V\~u Ng\d oc}

\maketitle

\vspace{-0.5cm}
\begin{center}
    {\em Dedicated to Professor J.~M. Montesinos Amilibia, with
    admiration.}
\end{center}


\begin{abstract}

\noindent 
Using an abstract notion of semiclassical quantization for
self\--adjoint operators, we prove that the joint spectrum of a
collection of commuting semiclassical self\--adjoint operators
converges to the classical spectrum given by the joint image of the
principal symbols, in the semiclassical limit.  This includes
Berezin\--Toeplitz quantization and certain cases of
$\hbar$\--pseudodifferential quantization, for instance when the
symbols are uniformly bounded, and extends a result by L.~Polterovich
and the authors.  In the last part of the paper we review the recent
solution to the inverse problem for quantum integrable systems with
periodic Hamiltonians, and explain how it also follows from the main
result in this paper.
\end{abstract}


\section{Introduction} \label{intro} In inverse spectral problems one
tries to recover geometric (or ``classical'') information from the
spectrum of a ``quantum'' operator. For instance, does the spectrum of
the Laplacian on a bounded euclidean domain completely determine the
geometry of the domain ? The problem goes back to S. Bochner and
H. Weyl \cite{We1911,We1912} in the late nineteenth and early
twentieth century, and was made popular, in the context of Riemannian
geometry, in M. Kac's famous article on ``can you hear the shape of a
drum", \cite{Ka66}, who attributes the origin of the question to
Bochner.

In this paper we will deal with a quite general setting of
semiclassical self-adjoint operators. Roughly speaking, a quantum
operator will be a family of operators $(T_\hbar)$ depending on a
small real parameter $\hbar>0$ reminiscent of the Planck constant. To
each such operator, one defines its ``classical limit'' to be a smooth
function on a smooth manifold (the phase space), called the
\emph{principal symbol} of the operator. The semiclassical inverse
problem is then the following.

\begin{question}{\bf (Semiclassical Inverse Spectral
    Problem)} \label{q1} Given the semiclassical joint
  spectrum $$(X_{\hbar})_{\hbar>0}\subset\R^d$$ of a quantum system of
  commuting semiclassical operators
  $$T_1:=(T_{1,\hbar})_{\hbar>0},\ldots,T_d:=(T_{d,\hbar})_{\hbar>0},$$
  how much can one recover about the classical system given by the
  principal symbols $f_1,\ldots,f_d$ of $T_1,\ldots,T_d$ ?
\end{question}

Of course, a complete answer to this question would be to fully
recover the principal symbols $f_1,\ldots,f_d$ themselves.

One can hope to obtain such general results by combining the use of
microlocal and symplectic techniques in the spirit of Duistermaat,
Helffer, H\"ormander, Sj\"ostrand, etc. However, to date only a
handful of results are known in this direction, see~\cite{CdV, CdV2,
  VN11, ChPeVN13, LF14, LFPeVN15}. If one restricts the class of
operators to be Schr\"odinger operators, whose principal symbol is of
the form $\xi^2+V(x)$, then the question amounts to recovering the
potential $V$; this has attracted a lot of mathematicians, and is
still an active area of research, see~\cite{He09, CdVGu11, GuPa09}.

In this paper we prove a general result giving a partial answer to
Question~\ref{q1}, inspired by previous works of Colin de
Verdiere~\cite{CdV, CdV2}, Polterovich, and the
authors~\cite{PePoVN14}, which says that even though we do not know
how to recover the principal symbols themselves, we can recover the
closure of their joint image, which is a subset of the affine space
$\mathbb{R}^d$. This gives a rigorous proof of the quantum mechanical
principle that says that: ``in the high frequency limit $\hbar\to 0$,
the spectrum of a quantum system converges to the numerical range of
its associated classical system".

\begin{theorem}
  \label{theo:main0}
  Let $I \subset (0,\,1]$ be a set with a limit point at $0$.  Then
  the limit set of the joint spectrum of a family of pairwise
  commuting self\--adjoint semiclassical operators
  \[
  T_1:=(T_{1,\h})_{\h\in I},\,\ldots,\, T_d:=(T_{d,\h})_{\h \in I}
  \]
  is the classical spectrum $\mathcal{S} \subset \mathbb{R}^d$ of
  $T_1,\dots, T_d$, that is, the closure of the joint image of the
  principal symbols of $T_1,\ldots,T_d$.
\end{theorem}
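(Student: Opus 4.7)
The plan is to establish the two set-theoretic inclusions $\limsup_{\hbar\to 0}X_\hbar\subset\mathcal{S}$ and $\mathcal{S}\subset\liminf_{\hbar\to 0}X_\hbar$ separately, using the joint functional calculus available for the commuting family $T_{1,\hbar},\ldots,T_{d,\hbar}$. By the joint spectral theorem, for any bounded Borel function $\chi$ on $\R^d$ one may form $\chi(T_{1,\hbar},\ldots,T_{d,\hbar})$, and its operator norm equals $\sup_{X_\hbar}|\chi|$; equivalently, $\chi(T_\hbar)=0$ if and only if $\chi$ vanishes on $X_\hbar$. The key semiclassical input is that the ``principal symbol'' of $\chi(T_\hbar)$ is $\chi\circ f$, where $f=(f_1,\ldots,f_d)$, yielding $\|\chi(T_\hbar)\|\to\|\chi\circ f\|_\infty$ as $\hbar\to 0$ for a sufficiently large class of test functions.

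\textit{First inclusion.} Suppose $x_n\in X_{\hbar_n}$ with $\hbar_n\to 0$ and $x_n\to x$, and assume for a contradiction that $x\notin\mathcal{S}$. Since $\mathcal{S}$ is closed, I can choose $\chi\in C_c^\infty(\R^d)$ with $\chi(x)=1$ and support disjoint from $\mathcal{S}$. Then $\chi\circ f\equiv 0$ on the phase space, so the norm estimate gives $\|\chi(T_\hbar)\|\to 0$. On the other hand, $\|\chi(T_{\hbar_n})\|\ge|\chi(x_n)|\to 1$, a contradiction.

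\textit{Second inclusion.} For $x\in f(M)$, say $x=f(m_0)$, and $\varepsilon>0$, pick a non-negative bump $\chi_\varepsilon\in C_c^\infty(\R^d)$ supported in the open ball $B(x,\varepsilon)$ with $\chi_\varepsilon(x)=1$. Then $(\chi_\varepsilon\circ f)(m_0)=1$, so $\|\chi_\varepsilon\circ f\|_\infty\ge 1$ and the norm estimate gives $\|\chi_\varepsilon(T_\hbar)\|\ge 1/2$ for all sufficiently small $\hbar$. By the characterization above, $\chi_\varepsilon$ must be nonzero somewhere on $X_\hbar$, hence $X_\hbar\cap B(x,\varepsilon)\ne\emptyset$. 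A diagonal argument over $\varepsilon\to 0$ produces a sequence in $X_\hbar$ converging to $x$, and density of $f(M)$ in $\mathcal{S}$ handles the passage to the closure.

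\textit{Main obstacle.} The technical heart is the semiclassical functional calculus itself, namely the estimate $\|\chi(T_\hbar)\|\to\|\chi\circ f\|_\infty$ for every $\chi\in C_c^\infty(\R^d)$. The abstract axioms on the quantization---multiplicativity of the principal symbol and a norm bound $\|T_\hbar\|\le\|f\|_\infty+o(1)$ for bounded symbols---must be leveraged to control arbitrary smooth functions of the commuting tuple. A natural route is a Helffer--Sj\"ostrand-type formula reducing matters to the resolvents $(T_{i,\hbar}-z)^{-1}$, whose single-variable semiclassical behaviour is governed by the theory developed in~\cite{PePoVN14}; the restriction in the abstract to the bounded-symbol regime is presumably precisely what keeps this calculus well-posed, and pushing beyond it is where the principal technical difficulty lies.
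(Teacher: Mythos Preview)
Your strategy is sound in outline, but the ``main obstacle'' you flag is essentially the whole proof, and in the abstract setting of Definition~\ref{BT} it is not clear it can be overcome at all. The axioms (Q1)--(Q5) provide only a rudimentary algebraic calculus: multiplicativity of principal symbols when one factor is compactly supported, and compatibility with squaring. A Helffer--Sj\"ostrand construction would require semiclassical control of resolvents $(T_{j,\hbar}-z)^{-1}$, hence a parametrix for $\OP(f_j-z)$, and nothing in the axioms furnishes this; the multivariate version needed for $\chi(T_1,\ldots,T_d)$ with $\chi\in C_c^\infty(\R^d)$ is more delicate still. So you have reduced the theorem to a lemma at least as hard as the theorem itself, and left that lemma unproved.

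The paper takes a genuinely different and far more elementary route that avoids smooth functional calculus entirely. For each $c\in\R^d$ it uses the single \emph{quadratic} test function $\varphi_c(x)=\sum_j(x_j-c_j)^2$, forming $\varphi_c(T_1,\ldots,T_d)=\sum_j(T_j-c_j\,\mathrm{Id})^2$ directly from the joint spectral measure. Axioms (Q\ref{item:one}) and (Q\ref{item:square}) alone show this operator differs by $\mathcal{O}(\hbar)$ from $\OP(\|F-c\|^2)$, and Lemma~\ref{lem:max} (a consequence of (Q1)--(Q4)) then gives $\inf\textup{Spec}(\varphi_c(T_\hbar))\to\inf_M\|F-c\|^2$. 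Since $c\in\overline{F(M)}$ iff this infimum is zero, and $c$ lies in the limit set of the joint spectrum iff the spectral infimum tends to~$0$, both inclusions drop out at once. As Remark~\ref{keyremark} emphasizes, the whole point of the new axiom (Q\ref{item:square}) is that it supplies \emph{exactly} the polynomial functional calculus needed here---no bump functions, no resolvent estimates. Your approach, while natural, overshoots what the axiomatic framework actually provides.
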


An illustration of the convergence statement in
Theorem~\ref{theo:main0} is depicted in Figure~\ref{newfigure2}, which
shows the joint spectrum of the ``normalized'' Quantum Spherical
Pendulum\footnote{Instead of the standard energy and momentum
  operators, the energy is replaced by its square root, in order to
  obtain symbols which are both (asymptotically) homogeneous of degree
  one.}

\begin{figure}[h]
  \centering \label{process2}
  \includegraphics[width=1\textwidth]{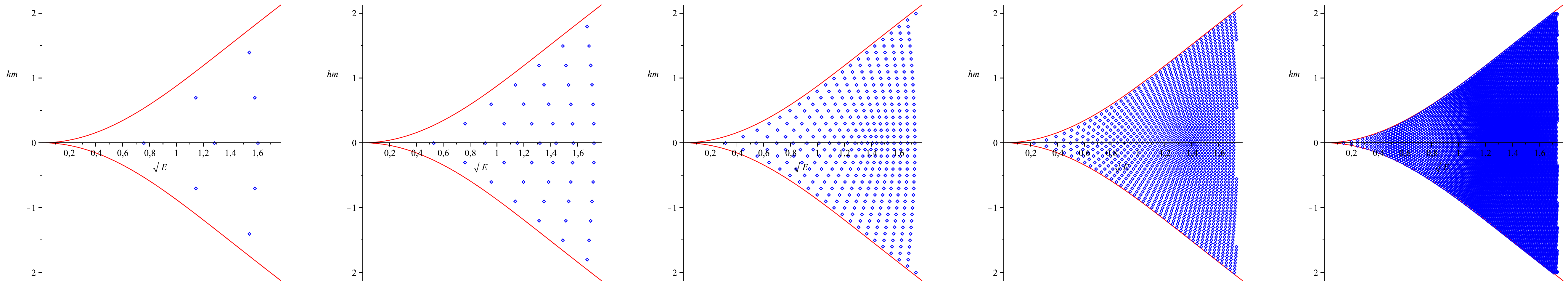}
  \caption{The dots in the figures form the semiclassical joint
    spectrum of the ``normalized'' Quantum Spherical Pendulum for the
    values of the Planck constant: $\hbar=0.7, 0.5, 0.3, 0.05, 0.02$.
    As $\hbar \to 0$, the semiclassical joint spectrum fills the
    inside of the red curve, which is the boundary of the classical
    spectrum of the system; this gives an illustration of the
    convergence stated in Theorem~\ref{theo:main0}.}
  \label{newfigure2}
\end{figure}

In~\cite{PePoVN14} an analogous statement was proved, but taking the
convex full on both the quantum and the classical spectrum.  We
achieve this improvement by introducing a new hypothesis, which takes
the form of the following seemingly simple axiom for the abstract
semiclassical quantization: for any symbol $f$, one should have
\[
\norm{\OP(f)^2\ - \OP(f^2)} = \mathcal{O}(\hbar).
\]
where $\OP$ denotes the quantization operation.  We refer to
Theorem~\ref{theo:main} for a detailed version of the above statement,
to Definition~\ref{BT} for the abstract notion of semiclassical
operators we use, and the upcoming sections for the necessary
preliminaries. The abstract notion we use, and hence the theorem,
apply to Berezin\--Toeplitz operators on compact manifolds, and
certain classes of pseuodifferential operators (this is explained in
Remark~\ref{importantremark}), for instance those with uniformly
bounded derivatives.

As we will explain, Theorem~\ref{theo:main} implies, in combination
with a theorem of Atiyah\--Guillemin\--Sternberg and Delzant, a
solution to the inverse problem for quantum toric integrable systems,
which recovers a recent result of Charles and the
authors~\cite{ChPeVN13}. This result was proved again shortly after by
Polterovich and the authors~\cite{PePoVN14} with a different method,
which is in fact the one which serves as inspiration for
Theorem~\ref{theo:main}, in combination with ideas introduced by Le
Floch and the authors in~\cite{LFPeVN15}.

\section{Semiclassical operators and an abstract semiclassical
  quantization}

We review Berezin\--Toeplitz quantization,
$\hbar$\--pseudodifferential quantization, and then introduce an
abstract notion of semiclassical quantization which includes the
former, and certain classes of the latter. This abstract notion is
inspired by, and extends, a notion introduced by Polterovich and the
authors~\cite{PePoVN14} and as we will see in Section~\ref{ip} it
allows us to prove a stronger convergence result in certain cases.

\subsection{Berezin\--Toeplitz operators} \label{bz}

The microlocal analysis of Toeplitz operators is rapidly evolving
nowadays, see for instance~\cite{BorPauUri,Ch2003b, Ch2006b, Ch2006a,
  Ch2007,MaMar2008,Schli2010} following the pioneer work of Boutet de
Movel and Guillemin~\cite{BG81}.

Let us recall the basic facts we need on connections of Hermitian line
bundles (a good reference for this material are Duistermaat's notes
\cite{Du2004}).  With the help of these facts we will introduce a
fundamental notion in both geometry and analysis, that of a prequantum
line bundle.

Let $M$ be a smooth manifold. Let ${\mathcal{L}} \rightarrow M$ be a
Hermitian line bundle over $M$. That is, ${\mathcal{L}} \rightarrow M$
is a complex line bundle over $M$ which is endowed with a Hermitian
metric. Denote by ${\rm C} ^{ \infty} (M, {\mathcal{L}})$ the space of
smooth sections of this bundle, and by $\Om^{1} (M, {\mathcal{L}})$
the space of smooth ${\mathcal{L}}$\--valued $1$-forms.  A
\emph{connection of ${\mathcal{L}}$} is a linear operator
$\nabla : {\rm C}^{\infty} (M, {\mathcal{L}}) \rightarrow \Om^{1} (M,
{\mathcal{L}})$ which satisfies Leibniz's rule, that is,
$$ \nabla (f s ) = {\rm d}f \otimes s + f \nabla s$$
for all smooth functions $f \in {\rm C}^{\infty} (M)$ and all smooth
sections $s \in \op{C}^{\infty}(M, {\mathcal{L}}) $.

Let $X$ be a smooth vector field on $M$.  The \emph{covariant
  derivative of the section $s$} of ${\mathcal{L}}$ with respect to
the vector field $X$ is given by the formula
$ \nabla_X s = \nabla s(X).  $ The smooth $2$\--form $R$ of $M$
defined by the equation
$$ R (X, Y) = [\nabla_X , \nabla_Y ] - \nabla_{[X, Y]} $$
for any vector fields $X$, $ Y$ of $M$ is called the \emph{curvature
  of the connection}. Let $(\cdot,\cdot)$ denote the Hermitian scalar
product.  We say that the connection is \emph{compatible with the
  Hermitian structure} if
$ {\rm d}(s, t) = ( \nabla s , t) + ( s, \nabla t),$ for any smooth
sections $s$ and $t$ of the line bundle ${\mathcal{L}}$.  In this
case, $R = \frac{1}{{\rm i}} \om$ where $\om$ is
real\--valued. Throughout the present paper all connections considered
are implicitly assumed to be compatible with the metric.

Assume that $\frac{1}{{\rm i}} \om$ is the curvature of a Hermitian
line bundle connection. Then the cohomology class of the form
$\om/ 2 \pi $ is integral, that it, it lies in the image of the
canonical homomorphism $$H^2 (M, \Z) \rightarrow H^2 (M, \R).$$
Conversely, for any smooth $2$\--form $\om \in \Om^2 (M, \R)$ for
which $[\om]/2\pi$ is integral there is a Hermitian line bundle
${\mathcal{L}} \to M$ endowed with a connection $\nabla$ whose
curvature is $\frac{1}{{\rm i}} \om$. Moreover, the line bundle
${\mathcal{L}}$ and $\nabla$ are unique up to isomorphisms. For a
proof of these results, we refer the reader to \cite[Theorem
10.1]{Du2004} or \cite[Section 15.3]{Duistermaat_index}. Now we are
ready to recall the following essential definition.

\begin{definition}
  Let $(M, \om)$ be a symplectic manifold.
  \begin{itemize}
  \item A \emph{prequantum bundle} on $M$ is a Hermitian line bundle
    ${\mathcal{L}} \rightarrow M$ with a connection of curvature
    $\frac{1}{{\rm i}} \om$.

    In this case we say that the symplectic manifold $(M,\omega)$ is
    \emph{prequantizable}.
  \item A \emph{prequantum bundle automorphism} is a vector bundle
    automorphism of ${\mathcal{L}} \to M$ which preserves both the
    metric and the connection.
  \end{itemize}
\end{definition}

Let us know consider a prequantum line bundle
${\mathcal{L}} \rightarrow M$ over the symplectic manifold $M$.  Let
$G$ be a Lie group with Lie algebra $\mathfrak{g}$. Assume that $G$
acts on ${\mathcal{L}}$ by prequantum bundle automorphisms, as defined
above. This $G$\--action lifts a $G$\--action on $M$. For
$X \in \mathfrak{g}$ let $X^\sharp$ denote the infinitesimal action of
$X$ on $M$.  The latter $G$\--action is Hamiltonian with momentum map
$F$ given by the following condition: the action induced by
$\mathfrak g$ on ${\rm C}^{\infty}(M, \mathcal{L})$ is given by the
Kostant\--Souriau operators
\begin{equation} \label{eq:KS} f \mapsto \nabla_{X^\sharp}f + {\rm i}
  \langle F , X \rangle f , \qquad X \in \mathfrak{g}.
\end{equation}
and $\nabla$ denotes the covariant derivative of the prequantum bundle
(cf. \cite[Proposition 15.2]{Duistermaat_index}).  If both the Lie
group $G$ and the manifold $M$ are connected, then the $G$\--action on
${\mathcal{L}}$ is conversely determined by the action on $M$ and by
the momentum map $F$. It is important to notice that we cannot obtain
every momentum map generating a given action in this manner (such
momentum maps correspond to the Lie algebra representations on the
prequantum bundle by means of (\ref{eq:KS}).

Suppose that $(M,\,\omega)$ is a prequantizable closed (that is both
compact and with no boundary) symplectic manifold. Let
$\mathcal{L} \to M$ be a prequantum line bundle, and assume that $M$
admits a complex structure that is compatible with the symplectic form
$\om$. In fact, $(M,\omega)$ is a K\"ahler manifold.  The holomorphic
structure of $\mathcal{L} \to M$ is uniquely determined by the
compatibility condition with the connection.

Consider a positive integer $k=1/\hbar$ and let $\mathcal{L}^k$ denote
the $k$th tensor power of the line bundle $\mathcal{L}$.  We write
 $$\mathcal{H}_{\hbar}:={H}^0(M,\mathcal{L}^k)
 $$ 
 for the space of holomorphic sections of $\mathcal{L}^k$.  The space
 $\mathcal{H}_{\hbar}$ is a finite dimensional subspace of the Hilbert
 space ${L}^2(M, \mathcal{L}^k)$ (this is because $M$ is a closed
 manifold).  If $\lambda$ denotes the Liouville measure of $M$, the
 scalar product is given by integration of the Hermitian (pointwise)
 scalar product of sections against $\lambda$.

 \begin{definition} \label{tp} Let $\Pi_{\hbar}$ be the surjective
   orthogonal projector
   $$ {\rm L}^2(M, \mathcal{L}^k) \to {\mathcal{H}}_{\hbar}.
 $$  
 \begin{itemize}
 \item A semiclassical \emph{Berezin\--Toeplitz operator} is any
   sequence of the form
$$T:=(T_{\hbar}:= \Pi_{\hbar} f(\cdot,\, k) \colon \Hilbert_{\hbar} \rightarrow \Hilbert_{\hbar})_{\hbar=1/k, \; k \in \mathbb{N}^*}$$ 
where the multiplication operator $f(\cdot,\,k)$ is a sequence in
${\rm C}^{\infty}(M)$ with an asymptotic expansion
 $$
 f_0 + k^{-1} f_1 + k^{-2}f_2 + \cdots
 $$
 for the ${\rm C}^{\infty}$ topology.
\item The first coefficient $f_0$ is called the {\em principal symbol}
  of $T$.
\end{itemize}
\end{definition}

\subsection{Pseudodifferential operators} \label{az}

\label{sec:pseudodifferential}

$\hbar$\--pseudodifferential operators acting on the Hilbert space
$\mathcal{H}=L^2(\R^n)$ give a semiclassical quantization of the
manifold $M=\R^{2n}$; this is a semiclassical version of the one given
by homogeneous pseudodifferential operators, see for
instance~\cite{DiSj99} or~\cite{Zw2012}.

Let $\mathcal{A}_0$ be the H\"ormander class whose elements are the
functions $f$ in the space $\textup{C}^\infty(\R^{2n}_{(x,\xi)})$ such
that the following holds: there is $m\in\R$ for which
\begin{equation}
  |\partial_{(x,\xi)}^\alpha f|\leq C_\alpha\langle (x,\xi)\rangle^m
  \label{equ:hormander}
\end{equation}
for every $\alpha \in \mathbb{N}^{2n}$ (here the notation
$\langle z \rangle$ stands for $(1+|z|^2)^{1/2}$). Symbolic calculus
for pseudodifferential operators is known to hold in the case when the
symbols of the operators are in $\mathcal{A}_0$ (for instance).

\begin{definition} \label{wq} Let $f\in\mathcal{A}_0$.  The \emph{Weyl
    quantization of} $f$ is given on the Schwartz space
  $\mathcal{S}(\R^n)$ by the expression:
  \begin{equation}
    \label{equ:weyl}
    (\OP(f) u)(x) := \frac{1}{(2\pi\hbar)^{n}}\int_{\R^{n}} \int_{\R^{n}}
    \textup{e}^{\frac{{\rm i}}{\hbar}((x-y)\cdot\xi)} f({\textstyle\frac{x+y}{2}},\xi)u(y)\,\, {\rm d}y \,{\rm d}\xi.
    \nonumber
  \end{equation}
\end{definition}

This definition is commonly used to define $\h$-pseudodifferential
operators on $\R^n$. It can also give a semiclassical quantization of
a cotangent bundle $M=T^*X$, where $X$ is a smooth $n$\--dimensional
closed manifold with a smooth density, as follows. Let $X$ be covered
by a collection of smooth charts $$\Big\{U_1,\dots,U_N\Big\},$$ where
each $U_i$ is over a convex bounded domain of the Euclidean space
$\R^n$ (equipped with the Lebesgue measure). By standard manifold
theory, there exists a partition of unity
$$\chi_1^2,\dots,\chi_N^2$$ which is subordinated to the cover $\{U_1,\dots,U_N\}.$ In this case, $\mathcal{A}_0$ is the space of
functions $f\in\textup{C}^\infty(\textup{T}^*X)$ satisfying, for all
$(x,\xi)\in \textup{T}^*X,\,\alpha\in \mathbb{N}^n$, and for some
$m\in\R$, the condition:
\begin{equation}
  \abs{\partial_x^\alpha\partial_\xi^\beta f(x,\xi)} \leq C_\alpha
  \langle \xi \rangle ^{m-|\beta|}
  \label{equ:nirenberg}
\end{equation}

Recall Definition~\ref{wq} and let $\OP^j(f)$ be the Weyl quantization
in $U_j$. Define:
\begin{equation} \label{eq-weyl-manifold} \OP(f) u:= \sum_{j=1}^N
  \chi_j \cdot \OP^j(f) (\chi_j u)\;, \,\,\,\,\,
  u\in\textup{C}^\infty(X)\,, \nonumber
\end{equation}
which is a pseudodifferential operator on $X$. The principal symbol of
this operator is the smooth function
$$
f:=\sum_{i=1}^N f\chi_j^2.
$$

\begin{definition} \label{ps} Let $X$ be either $\R^n$, or a closed
  manifold, as above. Let $(f_\hbar)_{\hbar\in(0,1]}$ be a family of
  elements of $\mathcal{A}_0$ such that the
  estimate~\eqref{equ:hormander} (in the case $X=\R^{n}$)
  or~\eqref{equ:nirenberg} (if $X$ is a closed manifold) holds
  uniformly for $\hbar\in(0,1]$. Then the family
  $$T:=(\OP(f_\hbar))_{\hbar \in (0,\,1]}$$ is called a
  \emph{semiclassical} $\hbar$\--\emph{pseudodifferential operator on}
  $X$.
\end{definition}

The above definition may also be made for a subset $I \subset (0,\,1]$
which has a limit point at $0$, for instance
$$I=\Big\{\frac{1}{k} \,\,|\,\, k \in \mathbb{N}^*\Big\}.$$

\subsection{Abstract semiclassical quantization}

The results presented in this paper hold for both pseudodifferential
and Berezin-Toeplitz quantization. In fact, they only require a few
key properties, and it is interesting to state them in an abstract
way, as follows.

Let $I \subset (0,1]$ be a set that accumulates at $0$.  Suppose that
$M$ is a connected manifold (closed or open) and let $\mathcal{A}_0$
be a subalgebra of the algebra of smooth functions
$\textup{C}^\infty(M;\R)$ containing all constants as well as all
compactly supported functions.

For a complex Hilbert space $\mathcal{H}$ we denote by
$\mathcal{L}(\mathcal{H})$ the set of all linear self\--adjoint
operators on $\mathcal{H}$ (bounded or unbounded). The following
definition is essentially the same as in \cite{PePoVN14} with the
exception of the new Axiom {(Q\ref{item:square})}, which is needed for
the proof of our main result.

\begin{definition} \label{BT}

  A \emph{semiclassical quantization} of the pair $(M,\mathcal{A}_0)$
  is given by:
  \begin{itemize}
  \item a family of complex Hilbert spaces
    $\mathcal{H}_{\hbar},\; \hbar\in I$, and
  \item a family of $\R$\--linear maps
    $\OP \colon \mathcal{A}_0 \to \mathcal{L}(\mathcal{H}_{\hbar})$,
  \end{itemize}
  that satisfy the following axioms, where $f,g \in \mathcal{A}_0$:
  \begin{enumerate}[{\rm (Q1)}]
  \item \label{item:one}
    $\norm{\OP (1) - {\rm Id}} = \mathcal{O}(\hbar)$
    \emph{(normalization)};
  \item \label{item:garding} for every function $f \geq 0$ there is a
    constant $C_f$ for which $\OP (f) \geq -C_f \hbar $
    \emph{(quasi-positivity)};
  \item \label{item:norm} if $f\in\mathcal{A}_0$ is that such that
    $f\neq 0$ and also has compact support, then
  $$\liminf_{\hbar\to 0}
  \norm{\OP(f)}>0$$ \emph{(non-degeneracy)};
\item \label{item:symbolic} if $g$ has compact support, then the
  operator $\OP(f) \circ \OP(g)$ is bounded for every $f$, and
  \[ \norm{\OP(f) \circ \OP(g) - \OP(fg)} = \mathcal{O}(\hbar),
  \] \emph{(product formula)};
\item \label{item:square} if $f\in \mathcal{A}_0$, then
  \[
  \norm{\OP(f)^2\ - \OP(f^2)} = \mathcal{O}(\hbar),
  \]
  \emph{(square formula).}
\end{enumerate}
We say that a manifold is \emph{quantizable} if it has a semiclassical
quantization.
\end{definition}

Let $\mathcal{A}_I$ be the algebra whose elements are collections
$\vec{f} =(f_\hbar)_{\hbar \in I}$, $f_\hbar \in \mathcal{A}_0$, that
satisfy that for each $\f$ there is $f_0 \in \mathcal{A}_0$ such that
\begin{equation}\label{eq-vector}
  f_\hbar = f_0+ \hbar f_{1,\hbar}\;,
\end{equation}
where $f_{1,\hbar}$ is uniformly bounded in the parameter $\hbar$ as
well as supported in the same compact subset $K(\f)$ of $M$.

\begin{definition} \label{xd} A \emph{semiclassical operator} is an
  element in the image of the map
  $$\text{Op}: \mathcal{A}_I \to \prod_{\hbar \in I}
  \mathcal{L}(\mathcal{H}_\hbar)$$ defined by
  $$
  \f=(f_\hbar) \mapsto (\OP(f_\hbar))\;.$$
\end{definition}
  
  \begin{definition}\label{xd2}
    Let $\f \in \mathcal{A}_I$. The function $f_0 \in \mathcal{A}_0$
    given by \eqref{eq-vector} is called the \emph{principal symbol}
    of $\text{Op}(\f)$.
  \end{definition}

  It follows from the axioms in Definition~\ref{xd} that the principal
  symbol in Definition~\ref{xd2} is uniquely defined,
  see~\cite{PePoVN14}.

\label{sec:semicl-oper}
Notice that in the definition of semiclassical operators, the manifold
$M$ is not required to be symplectic.  The following proposition gives
the two major examples of semiclassical operators, for which the phase
space $M$ is symplectic.
\begin{proposition}
  \label{prop:major-examples}
  \begin{enumerate}
  \item Semiclassical Berezin-Toeplitz operators satisfy the axioms
    \emph{(Q1--Q5)}.
  \item Semiclassical pseudodifferential operators which mildly
    depends on $\hbar$ satisfy the axioms \emph{(Q1--Q4)}. Here we say
    that $f_\hbar\in \mathcal{A}_I$ mildly depends on $\hbar$ if
    $f_\h$ can be written
    \[
    f_\hbar(x,\xi) = f_0(x,\xi) + \hbar f_{1,\hbar}(x,\xi),
    \]
    where all $f_{1,\hbar}(x,\xi)$ are both uniformly bounded in
    $\hbar$ as well as compactly supported in the same set.
  \item Semiclassical pseudodifferential operators which are uniformly
    bounded satisfy the axiom \emph{(Q\ref{item:square})}. More
    precisely, by uniformly bounded we mean that for any
    $f_\hbar\in \mathcal{A}_I$, and every
    $\alpha \in \mathbb{N}^{2n}$, there is a constant $C_\alpha$ such
    that
    \[
    |\partial_{(x,\xi)}^\alpha f_\hbar(x,\xi)|\leq C_\alpha \qquad
    \forall \hbar\in(0,1].
    \]
  \end{enumerate}
\end{proposition}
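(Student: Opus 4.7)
The strategy is to rely on the symbolic calculi already established in the literature and in \cite{PePoVN14} for axioms (Q1)--(Q4), and to isolate the genuinely new content, which is axiom (Q5). Concretely, (Q1)--(Q4) for Berezin--Toeplitz operators rest on the calculus of Boutet de Monvel--Guillemin \cite{BG81} and follow the arguments in \cite{PePoVN14}; for semiclassical pseudodifferential operators with mild $\hbar$-dependence, they are standard consequences of the semiclassical Weyl calculus (see \cite{DiSj99, Zw2012}): (Q1) is direct from $\OP(1)=\textup{Id}$, (Q2) is the semiclassical sharp G{\aa}rding inequality, (Q3) follows by testing against Gaussian/coherent states, and (Q4) is the standard composition formula. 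The plan is to import those arguments and concentrate on (Q5) in parts (1) and (3).

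For part (1), axiom (Q5) falls out of (Q4) for free. Since the base manifold $M$ in the Berezin--Toeplitz setting is closed, every $f \in \mathcal{A}_0$ is automatically compactly supported, so applying (Q4) with $g=f$ yields
$$\norm{\OP(f)\circ\OP(f)-\OP(f^2)} = \mathcal{O}(\hbar).$$

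For part (3), the plan is to exploit the Moyal product expansion. In the Weyl calculus one has $\OP(f)\circ\OP(g)=\OP(f\# g)$, with the asymptotic expansion
$$f\# g \;=\; fg \;+\; \frac{\hbar}{2\ii}\{f,g\} \;+\; \hbar^{2}\, r_{\hbar}(f,g),$$
where $r_{\hbar}(f,g)$ is an oscillatory integral whose symbol estimates involve only finitely many derivatives of $f$ and $g$. Specializing to $g=f$, the Poisson bracket $\{f,f\}$ vanishes identically, so
$$f \# f - f^{2} \;=\; \hbar^{2}\, r_{\hbar}(f,f).$$
Under the uniform boundedness hypothesis, every derivative of $f$ is bounded independently of $\hbar$, hence so is every derivative of $r_{\hbar}(f,f)$. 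The Calder\'on--Vaillancourt theorem then gives $\norm{\OP(r_{\hbar}(f,f))}=\mathcal{O}(1)$, and therefore
$$\norm{\OP(f)^{2}-\OP(f^{2})} \;=\; \mathcal{O}(\hbar^{2}),$$
which is strictly stronger than (Q5). On a closed manifold $X$, the local pseudodifferential calculi are glued via the partition of unity $\chi_{j}^{2}$ of Section~\ref{sec:pseudodifferential}; commuting $\OP^{j}$ with the cut-offs $\chi_{j}$ produces only further symbolic remainders of order $\hbar$, governed by the same Calder\'on--Vaillancourt bound.

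The main obstacle, and the reason the strictly stronger uniform-boundedness hypothesis is imposed in part (3), is precisely this Calder\'on--Vaillancourt step: without uniform control on all derivatives of $f$, the operator $\OP(f)$ need not be bounded on $L^{2}$, so $\OP(f)^{2}$ is only defined as an unbounded product and the statement of (Q5) becomes a priori meaningless. The cancellation $\{f,f\}=0$ is what makes (Q5) a consequence of pure symbolic calculus rather than of any finer spectral information.
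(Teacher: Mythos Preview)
Your proposal is correct and in fact more detailed than the paper's own treatment: the paper gives no argument beyond ``A proof of this can be found in most introductory papers or books on the subject,'' citing the same Berezin--Toeplitz and pseudodifferential references you invoke. Your explicit observation that (Q5) follows from (Q4) in the Berezin--Toeplitz case because $M$ is closed, and your Moyal-expansion argument exploiting $\{f,f\}=0$ together with Calder\'on--Vaillancourt for the uniformly bounded pseudodifferential case, are exactly the standard routes and are sound; the paper simply leaves them implicit.
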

A proof of this can be found in most introductory papers or books on
the subject. For instance, for Berezin-Toeplitz operators, one can
refer to \cite{BorPauUri,Ch2003b, Ch2006b, Ch2006a,
  Ch2007,MaMar2008,Schli2010}, and for pseudodifferential operators to
the books~\cite{DiSj99} or~\cite{Zw2012}. Here we do not claim to have
optimal hypothesis. For instance, the assumption on mild dependence on
$\h$ can certainly be weakened.

\begin{remark} \label{importantremark} For pseudodifferential
  operators, axiom \emph{(Q\ref{item:square})} (square formula) is
  more restrictive than the others; it does not hold for all classes
  of symbols. In order to use the results for differential operators
  like the Laplacian, one would need first a microlocalization
  estimate in order to truncate the original operators and hence
  transform them into uniformly bounded pseudodifferential
  operators. Such a truncation procedure is common in microlocal
  analysis (see for instance~\cite[Chapter 10]{DiSj99}).
\end{remark}

The following lemma is a consequence of the axioms (Q1--Q4):
\begin{lemma}[\mbox{\cite[Lemma 11]{PePoVN14}}] \label{lem:max} Take
  any $\f= (f_\hbar) \in\mathcal{A}_I$ with principal part $f_0$, and
  let $(\OP(f_\hbar))$ be the corresponding semiclassical operator.
  Let $\lambda_{\inf}(\hbar)\in [-\infty,+\infty)$ denote the infimum
  of the spectrum of $\OP(f_\hbar)$.  Then
  \begin{eqnarray} \label{eq:0} \lim_{\hbar \to 0}
    \lambda_{\inf}(\hbar)= \inf_M f_0.
  \end{eqnarray}
\end{lemma}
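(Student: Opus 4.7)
The plan is to prove the lemma by establishing the two one-sided inequalities
$\liminf_{\hbar\to 0}\lambda_{\inf}(\hbar)\geq \inf_M f_0$ and
$\limsup_{\hbar\to 0}\lambda_{\inf}(\hbar)\leq \inf_M f_0$. Throughout let $m=\inf_M f_0$; the case $m=-\infty$ will follow from the construction in the upper bound argument applied at arbitrarily negative values, so we may concentrate on $m>-\infty$. A preliminary observation, used repeatedly, is that for any uniformly bounded compactly supported family $g_\hbar\in\mathcal{A}_0$ the operator $\OP(g_\hbar)$ is uniformly bounded: applying quasi-positivity (Q\ref{item:garding}) to the nonnegative functions $\|g_\hbar\|_\infty\pm g_\hbar$ and using the normalization (Q\ref{item:one}) yields $\pm\OP(g_\hbar)\leq \|g_\hbar\|_\infty\,\mathrm{Id}+\mathcal{O}(\hbar)$.

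For the lower bound, since $f_0-m\geq 0$ on $M$ and $f_0-m\in\mathcal{A}_0$ (constants belong to $\mathcal{A}_0$), quasi-positivity gives $\OP(f_0)-m\,\OP(1)\geq -C\hbar$, and combining with (Q\ref{item:one}) we obtain $\OP(f_0)\geq m\,\mathrm{Id}-\mathcal{O}(\hbar)$. Writing $f_\hbar=f_0+\hbar f_{1,\hbar}$ with $f_{1,\hbar}$ uniformly bounded on a common compact set, and using the boundedness observation above, yields $\OP(f_\hbar)\geq m\,\mathrm{Id}-\mathcal{O}(\hbar)$, hence $\lambda_{\inf}(\hbar)\geq m-\mathcal{O}(\hbar)$.

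For the upper bound, given $\varepsilon>0$ pick a point $x_0\in M$ with $f_0(x_0)<m+\varepsilon$ and a nonzero nonnegative cutoff $\chi\in\mathcal{A}_0$, compactly supported in a neighborhood $U$ of $x_0$ on which $f_0<m+\varepsilon$. The key algebraic identity, obtained by two applications of the product formula (Q\ref{item:symbolic}) with the compactly supported factor $\chi$, is
\[
\OP\!\left(\chi^2(f_0-m-\varepsilon)\right)=\OP(\chi)\OP(f_0-m-\varepsilon)\OP(\chi)+\mathcal{O}(\hbar).
\]
Since $\chi^2(m+\varepsilon-f_0)\geq 0$ pointwise, quasi-positivity gives $\OP(\chi^2(f_0-m-\varepsilon))\leq C\hbar$, so the right-hand side is also $\leq C'\hbar$. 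Non-degeneracy (Q\ref{item:norm}) supplies unit vectors $v_\hbar$ with $\|\OP(\chi)v_\hbar\|\geq c/2$ for some $c>0$ and all small $\hbar$; setting $u_\hbar=\OP(\chi)v_\hbar$ gives $\langle\OP(f_0-m-\varepsilon)u_\hbar,u_\hbar\rangle\leq C'\hbar$, and then (Q\ref{item:one}) plus the bound on $\OP(f_{1,\hbar})$ yield
\[
\frac{\langle\OP(f_\hbar)u_\hbar,u_\hbar\rangle}{\|u_\hbar\|^2}\leq m+\varepsilon+\mathcal{O}(\hbar).
\]
By the variational characterization of $\lambda_{\inf}(\hbar)$, this forces $\limsup_{\hbar\to 0}\lambda_{\inf}(\hbar)\leq m+\varepsilon$, and letting $\varepsilon\to 0$ finishes the proof.

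The main technical subtlety I expect is the sandwiching step that rewrites $\OP(\chi^2(f_0-m-\varepsilon))$ as $\OP(\chi)\OP(f_0-m-\varepsilon)\OP(\chi)$ modulo $\mathcal{O}(\hbar)$: since $f_0$ need not be compactly supported, the product formula must be applied with the compactly supported factor on the correct side, and one has to verify that the error term, which formally involves $\|\OP(\chi)\|$, remains $\mathcal{O}(\hbar)$. The preliminary boundedness observation handles precisely this point. Note that (Q\ref{item:square}) is not needed here; this lemma uses only axioms (Q\ref{item:one})--(Q\ref{item:symbolic}), consistent with its origin in \cite{PePoVN14}.
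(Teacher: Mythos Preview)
The paper does not give its own proof of this lemma but simply cites \cite[Lemma~11]{PePoVN14}. Your argument is correct and follows precisely the route of that reference: the lower bound comes from (Q\ref{item:garding}) and (Q\ref{item:one}) applied to $f_0-m\geq 0$, while the upper bound is obtained by sandwiching via (Q\ref{item:symbolic}) to get $\OP(\chi)\OP(f_0-m-\varepsilon)\OP(\chi)\leq C\hbar$ and then invoking (Q\ref{item:norm}) to produce the quasi-mode $u_\hbar=\OP(\chi)v_\hbar$; as you note, only (Q\ref{item:one})--(Q\ref{item:symbolic}) are used.
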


\section{Joint Spectrum of a family of semiclassical operators}

We recall that to any self-adjoint operator $A$ on a Hilbert space,
the spectral theorem associates a projector-valued measure $\mu_A$,
called the spectral measure, such that \[ A = \int_{\R} t d(\mu_A)(t),
\]
and whose support is the spectrum of $A$. A similar theory holds for
commuting operators.  The self\--adjoint operators $S_1,\ldots,S_d$
are said to be mutually commuting if their corresponding spectral
measures $\mu_1,\dots,\mu_d$ pairwise commute. In this case we may
then define the joint spectral measure
 $$\mu := \mu_1\otimes\cdots\otimes\mu_d
 $$
 on $\R^d$. The \emph{joint spectrum} of $(S_1,\ldots,S_d)$ is the
 support of the joint spectral measure, that is:
 \begin{multline*}
   c\in \op{JointSpec}(S_1,\, \ldots, \, S_d) \iff \\
   \forall \epsilon>0, \quad \mu_1([c_1-\epsilon, c_1+\epsilon]) \circ
   \cdots \circ \mu_d([c_d-\epsilon, c_d+\epsilon]) \neq 0.
 \end{multline*}

 In this paper we are interested in the joint spectrum of
 semiclassical operators, which is defined as follows. For
 $j \in \{1,\ldots,d\}$ let $$T_j=(T_{j,\hbar})_{\hbar \in I}$$ be
 semiclassical operators (as in Definitions~\ref{tp} or \ref{ps}) on
 Hilbert spaces $(\mathcal{H}_\h)_{\hbar \in I}.$ We assume that for
 any fixed $h\in I$, the self-adjoint operators
 $T_{1,\hbar}, \dots, T_{d,\hbar}$ are mutually commuting.  For fixed
 $\hbar$, the \emph{joint spectrum} of
 $(T_{1,\hbar},\ldots,T_{d,\hbar})$ is as before the support of the
 joint spectral measure. For instance, if $\mathcal{H}_\h$ is finite
 dimensional (eg. in the case of Berezin-Toeplitz quantization on a
 closed K\"ahler manifold), then
$$
\op{JointSpec}(T_{1,\hbar},\ldots,T_{d,\hbar})
$$ 
is the set
$$
\Big\{(\lambda_1,\ldots,\lambda_d)\in \R^d\,\, |\,\, \exists v\neq
0,\,\, T_{j,\hbar} v = \lambda_j v, \forall j=1,\ldots,d \Big\}.
 $$
 We define the \emph{joint spectrum} of the semiclassical operators
 $(T_1,\ldots,T_d)$ to be the collection of all joint spectra of
 $(T_{1,\hbar},\ldots,T_{d,\hbar})$, $\hbar \in I$.

 \section{The inverse problem for commuting operators} \label{ip}

 \subsection{Convergence to classical spectrum}

 Following the physicists, we use the following definition.
 \begin{definition}
   We call \emph{classical spectrum} of $(T_1,\dots, T_d)$ the closure
   of the image $F(M)\subset \R^d,$ where $F=(f_1,\dots,f_d)$ is the
   map of principal symbols of $T_1,\dots,T_d$.
 \end{definition}

 In order to state the convergence results for the semiclassical
 spectrum of a collection of operators, we need to use a notion of
 limit for subsets of $\R^n$.

\begin{definition}
  Let $(A_\h)_{\hbar \in I}$ be a family of subsets of $\mathbb{R}^n$,
  where $I\subset(0,1]$ is a set which accumulates at $0$. The
  \emph{limit set of $(A_\h)_{\hbar \in I}$} is the subset
  $A_0\subset \R^n$ defined by
  \[
  a\in A_0 \iff \forall\epsilon>0, \forall \h_0\in I, \exists \h\in I,
  \h\leq \h_0, \text{ such that } A_\h \cap B(a,\epsilon) \neq
  \emptyset.
  \]
  Here $B(a,\epsilon)$ is the euclidean ball around $a$ of radius
  $\epsilon$.
\end{definition}

In the case of uniformly bounded subsets of $\R^n$, the limit set is
in fact a limit in the sense of the Hausdorff distance, which we
recall now.  Let $\|\cdot\|$ be the euclidean norm in
$\mathbb{R}^n$. For any $\epsilon>0$ and any subset $X$ of $\R^n$, we
denote by $X_{\epsilon}$ the set
$$\bigcup_{x \in X} \Big\{m \in \R^n\, \, | \,\, \|x - m \|
\leq \epsilon\Big\}.$$
The \emph{Hausdorff distance} between two subsets $A$ and $B$ of
$\R^n$ is the number
 $$
 \inf\Big\{\epsilon > 0\,\, | \,\ A \subseteq B_\epsilon \ \mbox{and}\
 B \subseteq A_\epsilon\Big\}.
$$
We denote it by ${\rm d}_H(A,\,B)$.

\begin{definition}
  Let $(A_{\hbar})_{\hbar \in I}$ and $(B_{\hbar})_{\hbar \in I}$ be
  families of uniformly bounded subsets of $\mathbb{R}^n$, where
  $I\subset(0,1]$ is a set which accumulates at $0$.
  \begin{itemize}
  \item Fix $N\in\N$. We say that
    $$A_{\hbar} = B_{\hbar} + \mathcal{O}(\hbar^{N})$$ if there exists
    a constant $C>0$ such that
    $ {\rm d}_H(A_{\hbar},\,B_{\hbar})\leq C\hbar^{N}$ for every
    $\hbar \in I$.
  \item We say that $$A_\h = B_\h + \mathcal{O}(\h^{\infty})$$ if
    ${\rm d}_H(A_\h,\,B_\h)=\mathcal{O}(\h^{N})$ for every
    $N \in \mathbb{N}^*$.
  \item Let $A_0\subset\R^n$. We say that $A_0$ is a Hausdorff limit
    of $(A_{\hbar})_{\hbar \in I}$ if
    \[
    \lim_{\h\to 0} {\rm d}_H(A_\h,\,A_0) = 0.
    \]
  \end{itemize}
\end{definition}

\begin{remark}
  Let $(A_{\hbar})_{\hbar \in I}$ be a family of uniformly bounded
  subsets of $\mathbb{R}^n$. The limit set of
  $(A_{\hbar})_{\hbar \in I}$ is always a compact subset
  $A_0\subset\R^n$, and then $A_0$ is a Hausdorff limit of
  $(A_{\hbar})_{\hbar \in I}$.  Conversely, if a compact set $A_0$ is
  a Hausdorff limit of $(A_{\hbar})_{\hbar \in I}$, then it coincides
  with the limit set of $(A_{\hbar})_{\hbar \in I}$.
\end{remark}

\begin{figure}[h]
  \centering \label{process}
  \includegraphics[width=1\textwidth]{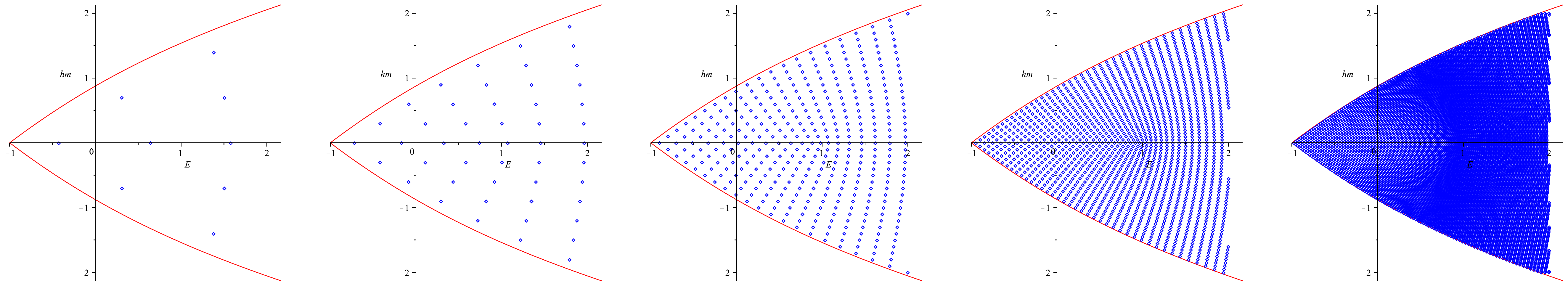}
  \caption{The figure depicts the semiclassical joint spectrum of the
    Quantum Spherical Pendulum for the following values of the Planck
    constant: $\hbar=0.7, 0.5, 0.3, 0.05, 0.02$. As $\hbar \to 0$, the
    semiclassical joint spectrum fills the inside of the red curve,
    which is the boundary of the classical spectrum of the system;
    this gives an illustration of the convergence stated in
    Theorem~\ref{theo:main}.  Notice that, in this figure, the joint
    spectrum is quite well behaved, because the operators form a
    completely integrable system. In this case, the joint spectrum is
    locally diffeomorphic to a lattice, as predicted by the
    Bohr-Sommerfeld rules, see~\cite{VN00}, and in fact much more than
    the classical spectrum can be recovered from the joint spectrum;
    see for instance~\cite{ChPeVN13, LFPeVN15}.}
  \label{newfigure}
\end{figure}

The following is the main theorem of this paper, which in some cases
strengthens previously known theorems (for instance in the case of
Berezin\--Toeplitz operators, and certain classes of pseuodifferential
operators).

\begin{theorem}
  \label{theo:main}
  Let $(T_1,\dots, T_d)$ be a family of pairwise commuting
  self\--adjoint semiclassical operators in the sense of
  Definition~\ref{sec:semicl-oper}. Then the limit set of the joint
  spectrum of $(T_1,\dots, T_d)$ is the classical spectrum of
  $(T_1,\dots, T_d)$.
\end{theorem}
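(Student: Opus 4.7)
The plan is to reduce the theorem to a single scalar statement about the bottom of the spectrum of a suitable auxiliary operator. Fix $c = (c_1,\ldots,c_d)\in\R^d$ and consider
\[
H_\hbar := \sum_{j=1}^d (T_{j,\hbar}-c_j)^2,
\]
whose classical analogue is the squared-distance function $g_c(x):= |F(x)-c|^2 = \sum_j (f_j(x)-c_j)^2$, with $F:=(f_1,\ldots,f_d)$ the map of principal symbols. I will show that $c$ lies in the limit set of the joint spectrum \emph{if and only if} $\lambda_{\inf}(H_\hbar)\to 0$ as $\hbar\to 0$.

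The analytic core is the identity $H_\hbar = \OP(g_c) + \mathcal{O}(\hbar)$ in operator norm. Expanding each square as $T_{j,\hbar}^2-2c_jT_{j,\hbar}+c_j^2$ and applying the new axiom (Q\ref{item:square}) to $T_{j,\hbar}^2 = \OP(f_{j,\hbar})^2$, together with $\R$-linearity of $\OP$ and axiom (Q\ref{item:one}), this reduces to the uniform norm estimate $\norm{\OP(f)}\leq \|f\|_\infty+\mathcal{O}(\hbar)$ for bounded compactly supported $f\in\mathcal{A}_0$. The latter follows from (Q\ref{item:garding}) applied to $\|f\|_\infty^2-f^2\geq 0$, combined with (Q\ref{item:square}) to pass from $\OP(f^2)$ back to $\OP(f)^2$, giving $\OP(f)^2\leq \|f\|_\infty^2\,{\rm Id}+\mathcal{O}(\hbar)$. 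The mild $\hbar$-dependence from \eqref{eq-vector} is absorbed into the $\mathcal{O}(\hbar)$ error because the perturbations $f_{1,\hbar,j}$ are uniformly bounded with common compact support.

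Since $\mathcal{A}_0$ is an algebra containing the $f_j$ and all constants, $g_c\in\mathcal{A}_0$, so Lemma~\ref{lem:max} applied to $\OP(g_c)$ yields $\lim_{\hbar\to 0}\lambda_{\inf}(\OP(g_c)) = \inf_M g_c = {\rm d}(c,\overline{F(M)})^2$; the standard min-max perturbation inequality transfers this to $H_\hbar$, so $\lim \lambda_{\inf}(H_\hbar) = 0$ precisely when $c\in\overline{F(M)}$. To link this with the joint spectra, I invoke the joint spectral theorem for commuting self-adjoint families. If $\lambda_{\inf}(H_\hbar)\to 0$, one selects unit vectors $v_\hbar$ with $\langle v_\hbar,H_\hbar v_\hbar\rangle\to 0$; their joint spectral probability measures $\nu_\hbar$ on $\R^d$ then satisfy $\int|\lambda-c|^2\,d\nu_\hbar \to 0$, and Markov's inequality produces joint spectral points within any prescribed $\epsilon>0$ of $c$ at arbitrarily small $\hbar$. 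Conversely, if $c$ is a limit point, approximate joint eigenvectors localized near joint spectral points close to $c$ furnish unit vectors $v$ with $\langle v,H_\hbar v\rangle$ as small as desired, forcing $\liminf\lambda_{\inf}(H_\hbar) = 0$.

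The main obstacle is the first step: the $\mathcal{O}(\hbar)$ operator-norm control of $H_\hbar - \OP(g_c)$. This is precisely what the new axiom (Q\ref{item:square}) is designed to enable. With only the older axioms of \cite{PePoVN14} one can handle arbitrary linear symbolic combinations of the $T_{j,\hbar}$, which is why \cite{PePoVN14} recovers only the convex hull of the joint spectrum; the quadratic probe $H_\hbar$ used here requires genuine quadratic symbolic calculus, and (Q\ref{item:square}) provides exactly the minimum such calculus in the abstract setting.
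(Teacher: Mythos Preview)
Your proof is correct and follows essentially the same approach as the paper: both introduce the quadratic probe $H_\hbar=\varphi_c(T_1,\dots,T_d)=\sum_j (T_{j,\hbar}-c_j)^2$, identify its principal symbol as $g_c=\|F-c\|^2$ via axioms (Q\ref{item:one}) and (Q\ref{item:square}), and then invoke Lemma~\ref{lem:max} to equate $\lim_{\hbar\to 0}\lambda_{\inf}(H_\hbar)$ with $\inf_M g_c={\rm d}(c,\overline{F(M)})^2$. The only cosmetic differences are that the paper links $\textup{Spec}(H_\hbar)$ to the joint spectrum through the pushforward identity $\textup{supp}((\varphi_c)_*\mu)=\overline{\varphi_c(\textup{supp}(\mu))}$ rather than your Markov/approximate-eigenvector argument, and applies Lemma~\ref{lem:max} directly to $\varphi_c(T_1,\dots,T_d)$ rather than to $\OP(g_c)$ followed by a norm perturbation.
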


\begin{proof}
  Let $\mu_j$ be the spectral measure of $T_j$, and let
  $\mu=\mu_1 \otimes \dots \otimes \mu_d$ be the joint spectral
  measure of $(T_1,\dots, T_d)$ on $\R^d$. For any
  $c=(c_1,\dots,c_d)\in\R^d$, let $\varphi_c:\R^d \to \R$ be defined
  by
  $$\varphi_c(x_1,\dots,x_d):=\sum_{i=1}^d (x_i-c_i)^2.$$ Then
\[
\varphi_c(T_1,\dots,T_d) = \int_{\R^d} \varphi_c(x)d\mu(x) =
\int_{\R}t d((\varphi_c)_*\mu)(t)
\]
The last equality implies that the spectrum of
$\varphi_c(T_1,\dots,T_d)$ is the support of $(\varphi_c)_*\mu$.

Now from Axioms {(Q\ref{item:one})} and {(Q\ref{item:square})}, the
principal symbol of $\varphi_c(T_1,\dots,T_d)$ is
$$\varphi_c(f_1,\dots,f_d)=\norm{F-c}^2$$ where $f_i$ is the principal
symbol of $T_i$. We see that $c\in\overline{F(M)}$ if and only if
$\inf \varphi_c(f_1,\dots,f_d) = 0$. By Lemma~\ref{lem:max},
\begin{equation}
  \inf_M \varphi_c(f_1,\dots,f_d) = \lim_{\h\to 0}\inf \textup{Spec}(\varphi_c(T_1,\dots,T_d)).
  \label{equ:inf}
\end{equation}

We have
$\textup{Spec}(\varphi_c(T_1,\dots,T_d)) =
\textup{supp}((\varphi_c)_*\mu)$.  Since $\varphi_c$ is continuous,
\begin{equation}
  \textup{supp}((\varphi_c)_*\mu) =
  \overline{\varphi_c(\textup{supp}(\mu))}.
  \label{equ:push-forward}
\end{equation}

Assume that $c$ is not in the limit set of the joint spectrum of
$(T_1,\dots,T_d)$. Thus there is a small ball around $c$ which is
disjoint from $\textup{JointSpec}(T_1,\dots,T_d)$ for $\h$ small
enough, which implies that there is some constant $\epsilon>0$ such
that
$$\inf(\varphi_c(\textup{JointSpec}(T_1,\dots,T_d)))>\epsilon.$$ Since
$\textup{JointSpec}(T_1,\dots,T_d) = \textup{supp}(\mu)$ , we get in
view of \eqref{equ:push-forward} that
$$\inf(\textup{supp}((\varphi_c)_*\mu)) \geq \epsilon.$$ Therefore, by
Equation~\eqref{equ:inf}, we get that
$$\inf_M \varphi_c(f_1,\dots,f_d) \geq \epsilon.$$ Hence
$c\not\in\overline{F(M)}$, which says that $\overline{F(M)}$ is
contained in the limit set of the joint spectrum.

In fact, all converse implications hold true, which proves the reverse
inclusion and hence the theorem.\hfill\rule{1ex}{1ex}
\end{proof}

\medskip

Theorem~\ref{theo:main} shows that the classical spectrum can be
recovered from the quantum joint spectrum. In the case of
Berezin-Toeplitz operators, this generalizes a result
of~\cite{PePoVN14}, where the convexity of the classical spectrum was
required.  For classes of pseudodifferential operators for which Axiom
{(Q\ref{item:square})} does not hold, we cannot apply
Theorem~\ref{theo:main}; however, the convex case holds, as we recall
below.

\begin{remark} \label{keyremark} We want to emphasize that axiom
  \emph{(Q\ref{item:square})}, while seemingly simple and quite close
  indeed to axiom \emph{(Q\ref{item:symbolic})}, gives in fact a great
  advantage in the form of a rudimentary (polynomial) functional
  calculus. We conjecture that the strong conclusion of
  Theorem~\ref{theo:main}, compared to
  Theorem~\ref{theo:pseudodifferential} below, could not be obtained
  by axioms \emph{(Q1--Q4)} alone.
\end{remark}

In what follows we work with $\h$-pseudodifferential operators which
are not necessarily bounded (see
Section~\ref{sec:pseudodifferential}).

\begin{theorem}[\cite{PePoVN14}]
  \label{theo:pseudodifferential}
  Let $X$ be either $\mathbb{R}^n$, or a closed manifold. Let
  $(T_1,\dots T_d)$ be a family of pairwise commuting self\--adjoint
  semiclassical $\hbar$\--pseudo\-diffe\-rential operators on $X$ whose
  symbols mildly depend\footnote{See
    Proposition~\ref{prop:major-examples}} on $\hbar$.  Let
  $\mathcal{S}\subset\R^d$ be the classical spectrum of
  $(T_1,\ldots,T_d)$. Suppose that $\mathcal{S}$ is a convex set.
  Then:
  \begin{itemize}
  \item from $ \textup{JointSpec} (T_1,\ldots,T_d) $ one can recover
    $\mathcal{S}$;
  \item if moreover each $T_i$, $1 \leq i \leq d$, is bounded, then
    $\overline{\mathcal{S}}$ is the Hausdorff limit, as $\h\to 0$, of
    $\textup{Convex Hull}(\textup{JointSpec}(T_{1,\hbar},\dots,
    T_{d,\hbar}))$.
  \end{itemize}
\end{theorem}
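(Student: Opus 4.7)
\textbf{Proof proposal for Theorem~\ref{theo:pseudodifferential}.} The plan is to bypass the missing axiom (Q\ref{item:square}) by exploiting the convexity of $\mathcal{S}$: test the joint spectrum against every linear functional, reducing all information to one-dimensional spectra of single operators, to which Lemma~\ref{lem:max} applies. The closed convex set $\mathcal{S}$ can then be reconstructed from its support function, which is precisely what these one-dimensional infima compute in the semiclassical limit.

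Concretely, for every $\xi=(\xi_1,\dots,\xi_d)\in\R^d$, form the $\R$-linear combination
\[
T_{\xi,\h} := \sum_{i=1}^d \xi_i\, T_{i,\h}.
\]
Because $\mathcal{A}_0$ is an algebra and the quantization is $\R$-linear, $T_\xi=(T_{\xi,\h})$ is again a semiclassical $\h$-pseudodifferential operator with mild $\h$-dependence, self\--adjoint on a common core thanks to the commutativity of the $T_{i,\h}$, with principal symbol $\langle\xi,F\rangle=\sum_i \xi_i f_i$. Joint functional calculus for commuting self\--adjoint operators gives
\[
\textup{Spec}(T_{\xi,\h}) = \overline{\bigl\{\langle\xi,\lambda\rangle : \lambda\in\textup{JointSpec}(T_{1,\h},\dots,T_{d,\h})\bigr\}},
\]
so $\inf\textup{Spec}(T_{\xi,\h}) = \inf\langle\xi, \textup{JointSpec}(T_{1,\h},\dots,T_{d,\h})\rangle$. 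Since $T_\xi$ satisfies axioms (Q\ref{item:one})--(Q\ref{item:symbolic}) by Proposition~\ref{prop:major-examples}, applying Lemma~\ref{lem:max} yields
\[
\lim_{\h\to 0}\,\inf\bigl\langle\xi,\,\textup{JointSpec}(T_{1,\h},\dots,T_{d,\h})\bigr\rangle \;=\; \inf_M \langle\xi,F\rangle \;=\; \inf_{s\in\mathcal{S}}\langle\xi,s\rangle,
\]
where the last equality uses continuity of $\langle\xi,\cdot\rangle$ and $\mathcal{S}=\overline{F(M)}$.

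At this point convexity of $\mathcal{S}$ becomes decisive: by Hahn\--Banach, $\mathcal{S}$ is the intersection over $\xi\in\R^d$ of the closed half\--spaces $\{x\in\R^d : \langle\xi,x\rangle\geq \inf_{s\in\mathcal{S}}\langle\xi,s\rangle\}$, and each defining infimum has just been exhibited as a quantity computable from the family of joint spectra alone. This proves the first bullet. For the second bullet, when each $T_i$ is bounded the joint spectra $J_\h$ lie in a fixed compact ball of $\R^d$, as do their convex hulls $K_\h$. The support function of $K_\h$ is $\xi\mapsto \sup\langle\xi,J_\h\rangle$, and applying the previous analysis to $-\xi$ shows it converges pointwise in $\xi$ to the support function of $\overline{\mathcal{S}}$. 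Because the $K_\h$ are uniformly bounded, these support functions are uniformly Lipschitz on the unit sphere, so pointwise convergence upgrades to uniform convergence, which is equivalent to Hausdorff convergence $K_\h \to \overline{\mathcal{S}}$.

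The main obstacle is justifying the reduction in the first step: one must confirm that $T_\xi$ genuinely falls within the framework of Definition~\ref{sec:semicl-oper} so that Lemma~\ref{lem:max} applies, especially in the unbounded case where domain issues for sums of commuting self\--adjoint pseudodifferential operators need care. Mild $\h$\--dependence of the symbols and standard essential self\--adjointness arguments handle this, but it is the only place where the analytic hypotheses on the class of pseudodifferential operators really enter; once granted, the rest is a straightforward packaging of classical support\--function arguments, and the absence of axiom (Q\ref{item:square}) is compensated entirely by the convexity hypothesis.
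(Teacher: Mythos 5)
The paper states Theorem~\ref{theo:pseudodifferential} as a citation from \cite{PePoVN14} and does not reproduce a proof, so there is no in-text argument to compare against line by line; but your route is exactly the support-function mechanism that underlies the cited result, and it is correct. Forming the linear combinations $T_{\xi,\h}=\sum_i\xi_i T_{i,\h}$, invoking Lemma~\ref{lem:max} to obtain $\lim_{\h\to 0}\inf\operatorname{Spec}(T_{\xi,\h})=\inf_M\langle\xi,F\rangle$, and then recovering the closed convex set $\mathcal{S}$ as the intersection of its supporting half-spaces (respectively, for the bounded case, upgrading pointwise convergence of uniformly Lipschitz support functions on the sphere to uniform convergence, hence Hausdorff convergence) is sound and self-contained. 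The comparison with the paper's own proof of Theorem~\ref{theo:main} is instructive and worth making explicit: there the test functions are the quadratics $\varphi_c=\norm{\cdot-c}^2$, which detect the closure of $F(M)$ directly without any convexity assumption, but at the cost of needing axiom (Q\ref{item:square}) to compute the principal symbol of $\varphi_c(T_1,\dots,T_d)$; your linear test functions need only (Q1)--(Q4) and $\R$-linearity of $\OP$, and convexity is what replaces the lost information. The one place your write-up is genuinely thin is the unbounded case: you assert that $\sum_i\xi_iT_{i,\h}$ is self-adjoint and equals the spectral-calculus operator $\langle\xi,\cdot\rangle(T_{1,\h},\dots,T_{d,\h})$, which is true for commuting self-adjoint $\h$-pseudodifferential operators with symbols in the class $\mathcal{A}_0$ and mild $\h$-dependence (essential self-adjointness via G{\aa}rding plus Calder\'on--Vaillancourt, and identification of the operator sum with the joint functional calculus via the commuting spectral measures), but this deserves at least a sentence of justification rather than the phrase ``standard arguments,'' since it is precisely where the hypotheses on the symbol class are being used.
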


For further discussion on these results see \cite{PePoVN14,
  Pe2}. Notice that all the results presented in this paper strongly
rely on the self-adjointness of the operators, which ensures a stable
behaviour of the spectrum as $\hbar \to 0$. For general non-selfadoint
operators, for which there is considerable recent interest
(see~\cite{Tr,Sj1,Sj2}), similar results can probably be obtained for
the semiclassical pseudo-spectrum instead of the spectrum, but to the
authors knowledge, this has never been studied in the case of
commuting operators. On the other hand, for non-selfadjoint operators
that are normal, the stability of the spectrum is expected to hold,
see for instance~\cite{LFPe15b}.

\subsection{The completely integrable case}

Let $(M,\, \omega)$ be a $2n$\--dimensional symplectic manifold. Given
a smooth function $f \colon M \to \R$, we define the \emph{Hamiltonian
  vector field} $\mathcal{X}_f$ induced by $f$ on $M$ by
$$
\omega\,(\mathcal{X}_{f},\,\,\,\cdot)=-\,\,{\rm d} f.$$
This differential equation (or rather, system of differential
equations) is known as Hamilton's equation.

\begin{definition} \label{classical} A \emph{classical integrable
    system} on $(M,\, \omega)$ is given by a smooth
  $\mathbb{R}^n$\--valued map
$$
{F}:=(f_1,\, \ldots,\, f_n) \colon M \to \mathbb{R}^n
$$
such that each component function $f_i$ is constant along the flow of
the vector field $\mathcal{X}_{f_j}$ generated by the component $f_j$,
for all $i,j$, and the vector fields
$\mathcal{X}_{f_1}, \ldots, \mathcal{X}_{f_n}$ are linearly
independent almost everywhere.
\end{definition}

The first of the conditions in Definition~\ref{classical} can be
rephrased as $$\{f_i,f_j\}=0$$ for all $i,j$,
where $$\{f_i,f_j\}:=\omega(\mathcal{X}_{f_i},\mathcal{X}_{f_j})$$ are
the so called \emph{Poisson brackets} of $f_i$ and $f_j$; in this case
we say that $f_i$ and $f_j$ are in involution.  The integer $n$ (half
the dimension of $M$) in this definition is the largest integer for
which the conditions of the definition hold: that is, there is no set
of functions $$f_1,\ldots,f_k,$$ with $k > n$ which satisfies the
conditions above and it is in this sense that the word ``integrable
system" is used.

\begin{remark}
  The symplectic theory of finite dimensional integrable Hamiltonian
  systems relies on several fundamental
  results. Liouville\--Mineur\--Arnold's action\--angle
  theorem~\cite{mineur,arnold} is one of the fundamental pieces of the
  modern theory of integrable systems.  Duistermaat~\cite{Du1980}
  described the obstruction to the existence of global\--action
  coordinates in 1980, and this was the starting point of the global
  symplectic theory of integrable systems. Eliasson~\cite{El84,El90}
  proved in the 1980s a major theorem on the linearization of smooth
  non\--degenerate singularities of integrable systems, which
  continues to be one of the foundational and most useful results of
  the subjects; the majority of (but not all) results known to date
  about the general structure of integrable systems, assume that the
  singularities are non\--degenerate.
\end{remark}

In the 1980s the global classification of toric integrable systems of
Atiyah, Guillemin\--Sternberg, and Delzant opened up the doors and
served as inspiration to many authors working on global symplectic
invariants of integrable systems. Our next goal is to define toric
integrable systems, and the natural transformations between them, and
state the classification in the work of Atiyah, Guillemin\--Sternberg
and Delzant.  Let $(M,\omega)$ be a $2n$\--dimensional symplectic
manifold.  A smooth map
$$F:=(f_1,\dots,f_n) \colon M\to\mathbb{R}^n$$ on $(M,\omega)$ is a \emph{momentum map} for a
Hamiltonian $n$\--torus action if each of the Hamiltonian
flows $$t_j\mapsto \varphi_{f_j}^{t_j}$$ of the vector fields
$\mathcal{X}_{f_j}$ is periodic of period 1, and all of them pairwise
commute, that is,
$$\varphi_{f_j}^{t_j}\circ \varphi_{f_i}^{t_i} = \varphi_{f_i}^{t_i} \circ
\varphi_{f_j}^{t_j}$$
so that they define an action of the torus $\R^n/\Z^n$.

\begin{definition}
  We say that a \emph{momentum map} for a Hamiltonian $n$\--torus
  action is a \emph{toric integrable system}, or simply a \emph{toric
    system}, if in addition the following conditions hold:
  \begin{itemize}
  \item the manifold $M$ is closed and connected;
  \item the action of the torus $\R^n/\Z^n$ is effective.
  \end{itemize}
\end{definition}
 
The natural transformations between toric integrable systems preserve
the toric and the symplectic structure simultaneously, they are
precisely given by the following.
 
\begin{definition} \label{toriciso} Two toric systems $(M, \om ,F)$
  and $( M', \om' , F')$ are \emph{isomorphic} if there exists a
  symplectomorphism $\varphi: (M,\omega) \rightarrow (M',\omega')$
  such that $\varphi^* F' = F.$
\end{definition}

Atiyah and Guillemin\--Sternberg proved the following influential
result (in fact their result applied to much more general momentum
maps given by an $m$\--tuple on a $2n$\--manifold, where $n$ is not
necessarily equal to $m$ and the induced toral action is not
necessarily effective):

\begin{theorem}[\cite{At82,GuSt82}] \label{cos} The image $F(M)$ of a
  toric system $F \colon M \to \mathbb{R}^n$ is a convex polytope in
  $\mathbb{R}^n$.
\end{theorem}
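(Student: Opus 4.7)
The plan is to follow the classical Atiyah--Guillemin--Sternberg argument, which in fact yields the stronger statement that every nonempty fiber $F^{-1}(c)$ is connected and that $F(M)$ equals the convex hull of $F(M^{\mathbb{T}^n})$, where $M^{\mathbb{T}^n}$ is the fixed-point set of the torus action. First I would show that each component $f_j$ is a Morse--Bott function whose critical submanifolds are symplectic and have only even indices: since the Hamiltonian flow of $\mathcal{X}_{f_j}$ is periodic, $f_j$ is the momentum map of a Hamiltonian circle subaction, so its critical set is the fixed-point locus of this subaction, which by equivariant Darboux is a smooth symplectic submanifold; the Hessian in the normal bundle splits under the $S^1$-action into weight spaces whose negative part is automatically even-dimensional.

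Next I would prove by induction on $n$ that every fiber of $F$ is connected. The base case $n=1$ is the standard fact that, on a closed connected manifold, a Morse--Bott function with only even-index critical submanifolds has connected level sets. For the inductive step, writing $F=(F', f_n)$ with $F'\colon M\to\mathbb{R}^{n-1}$, one selects a generic rational covector $\eta$ so that $\langle \eta, F'\rangle$ still generates a circle action, pairs it with $f_n$ to obtain a two-dimensional momentum map, and applies the connectedness argument in this two-dimensional setting; density of rational directions upgrades the statement to arbitrary fibers. Once fibers are connected, convexity of $F(M)$ is immediate: for any two points $p,q\in M$ and any linear $\ell\colon \mathbb{R}^n\to\mathbb{R}$, connectedness of the fibers of $\ell\circ F$ forces $F(M)$ to meet every affine hyperplane separating $F(p)$ from $F(q)$, hence to contain the entire segment.

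Finally, to upgrade convexity to a polytope, I would invoke the local normal form for Hamiltonian torus actions at a fixed point (equivariant linearization): near each $p\in M^{\mathbb{T}^n}$, $F$ is modelled on the momentum map of a linear $\mathbb{T}^n$-representation, whose image is a rational polyhedral cone generated by finitely many weights. Compactness of $M$ ensures that $M^{\mathbb{T}^n}$ has only finitely many connected components, so $F(M^{\mathbb{T}^n})$ is finite; combining this local picture at each fixed point with the global convexity of $F(M)$ and the connectedness of the level sets, one identifies $F(M)$ with the convex hull of $F(M^{\mathbb{T}^n})$, which is a convex polytope.

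The main obstacle is clearly the connectedness step, which is the heart of the proof: the inductive reduction to a two-dimensional momentum map via rational approximation, combined with the careful Morse--Bott analysis at degenerate critical submanifolds, is the technical core of the argument. Once connectedness is established, both convexity and the polytope refinement follow from rather standard arguments in Morse theory and convex geometry.
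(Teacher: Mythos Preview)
The paper does not prove this theorem at all: Theorem~\ref{cos} is stated as a classical result and attributed, via the citation~\cite{At82,GuSt82}, to Atiyah and Guillemin--Sternberg; no argument is given or even sketched in the paper. There is therefore nothing in the paper to compare your proposal against.

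That said, your outline is a faithful sketch of Atiyah's original approach: the Morse--Bott/even-index structure of the components, the inductive connectedness of fibers, the passage from connected fibers to convexity, and the identification of the image with the convex hull of the (finitely many) fixed-point images via the local linear model. As a summary of the classical proof it is essentially correct, with the usual caveat that the inductive connectedness step and the rational-approximation argument require more care than a one-paragraph sketch can convey.
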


The set of fixed point (also called elliptic points) of the induced
$\R^n/\Z^n$\--action is a collection of symplectic submanifolds of
$M$, and its image under $F$ gives a finite collection of points
$$p_1,\ldots,p_k \in \mathbb{R}^n \,\,\,\, k\geq 1.$$ The convex
polytope in Theorem~\ref{cos} is precisely the set
$$
\Delta=\textup{Convex Hull}(\{p_1,\ldots,p_k\}).
$$
Shortly after Atiyah and Guillemin\--Sternberg proved their theorem,
Delzant proved a converse type result, hence giving a classification
of toric systems.

\begin{theorem}[\cite{De88}] \label{del} The image $F(M)$ of a toric
  system $F \colon M \to \mathbb{R}^n$ is a Delzant polytope
  (i.e. rational, simple, and smooth). Moreover, $(M,\omega,F)$ is
  classified, up to isomorphisms, by $F(M)$.
\end{theorem}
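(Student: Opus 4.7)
The plan is to prove the two halves of Theorem~\ref{del} separately: the Delzant conditions on $\Delta:=F(M)$ follow from a local normal form at the fixed points of the torus action, while the classification follows from an explicit symplectic-reduction construction that reconstructs $(M,\omega,F)$ from the polytope alone.

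For the first half, Theorem~\ref{cos} already supplies the convexity of $\Delta$ and its description as the convex hull of the fixed-point images $p_1,\dots,p_k$. The key additional input is the equivariant Darboux normal form at a fixed point $m\in F^{-1}(p_j)$, in the style of Marle--Guillemin--Sternberg: there exist $\mathbb{T}^n$-equivariant symplectic coordinates $(x,\xi)$ near $m$ in which the torus acts linearly with integer weights $w_1^{(j)},\dots,w_n^{(j)}\in \mathbb{Z}^n$ and the momentum map reads
\[
F(x,\xi) = p_j + \tfrac{1}{2}\sum_{i=1}^n (x_i^2+\xi_i^2)\,w_i^{(j)}.
\]
Effectiveness of the $\mathbb{T}^n$-action forces these weights to be a $\mathbb{Z}$-basis of $\mathbb{Z}^n$. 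Hence $p_j$ is a vertex of $\Delta$ from which exactly $n$ edges emerge, each in a primitive integer direction, and these directions form a $\mathbb{Z}$-basis. This is precisely the rational, simple, and smooth condition that defines a Delzant polytope at $p_j$, and running over all fixed points handles every vertex.

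For the classification, I would use Delzant's reduction construction. Given a Delzant polytope $\Delta=\{x\in\mathbb{R}^n : \langle x,u_i\rangle\geq \lambda_i,\ 1\leq i\leq d\}$ with primitive inward facet normals $u_i\in\mathbb{Z}^n$, the map $\pi:\mathbb{R}^d\to\mathbb{R}^n$ sending $e_i$ to $u_i$ is surjective with integral kernel, producing an exact sequence $1\to K\to \mathbb{T}^d\to \mathbb{T}^n\to 1$. Letting $K$ act on $(\mathbb{C}^d,\omega_{\mathrm{std}})$ through its inclusion in $\mathbb{T}^d$ is Hamiltonian with an explicit momentum map $\mu_K$, and the symplectic quotient $M_\Delta:=\mu_K^{-1}(-\lambda)/K$ is a closed symplectic manifold whose residual $\mathbb{T}^n = \mathbb{T}^d/K$-action is effective and has momentum image exactly $\Delta$; this proves existence. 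For uniqueness, given two toric systems $(M,\omega,F)$ and $(M',\omega',F')$ with $F(M)=F'(M')=\Delta$, the strategy is to build an equivariant symplectomorphism $\varphi$ with $\varphi^*F'=F$ first over the open stratum $F^{-1}(\mathrm{int}\,\Delta)$, where both sides are principal $\mathbb{T}^n$-bundles carrying canonical action--angle coordinates determined by $\Delta$ alone, and then extend it across the isotropy strata by matching the local models of the previous paragraph, which depend only on the primitive integer normals of the corresponding face.

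The main obstacle will be turning the pointwise agreement of local models into a single globally defined, smooth, equivariant symplectomorphism. The cleanest route is to show directly that every $(M,\omega,F)$ realizing $\Delta$ is equivariantly symplectomorphic to the reduced space $M_\Delta$ constructed above: build the map on the open stratum from its canonical action--angle chart, verify that it extends continuously across the faces using the local normal forms, and upgrade continuity to smoothness via the uniqueness of equivariant symplectic extensions of densely defined equivariant symplectomorphisms. This bypasses the more delicate face-by-face inductive gluing argument that would otherwise be required, and yields the classification.
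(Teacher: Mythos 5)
The paper does not prove Theorem~\ref{del}; it is Delzant's classification theorem, quoted from~\cite{De88} as background for the discussion of quantum toric systems, so there is no in-paper argument to compare against. What you have written is a sketch of the standard proof from the literature, so let me assess it on its own terms.

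The first half of your proposal is correct and is indeed the classical argument. For a toric system the fixed points of the $\mathbb{T}^n$-action are isolated (the components of $F$ are independent almost everywhere and $\dim M = 2n$), the equivariant Darboux/Marle--Guillemin--Sternberg normal form gives $F(z)=p_j+\tfrac12\sum |z_i|^2 w_i^{(j)}$ near each fixed point, and effectiveness of the global action passes to the linearized local model because the fixed-point set of any subtorus is a closed submanifold of the connected manifold $M$, hence cannot contain an open set without being all of $M$. This forces the weights $w_i^{(j)}$ to be a $\mathbb{Z}$-basis of the weight lattice, which is exactly the Delzant condition at the vertex $p_j$, and Theorem~\ref{cos} supplies convexity and the identification of the vertex set.

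The gap is in your uniqueness argument. You invoke ``canonical action--angle coordinates determined by $\Delta$ alone'' over $F^{-1}(\operatorname{int}\Delta)$, but action--angle coordinates are never canonical: they depend on a choice of Lagrangian section of the $\mathbb{T}^n$-fibration, and two such choices differ by a closed $1$-form on the base. Different choices give different symplectomorphisms of the open strata and, crucially, a generic choice will \emph{not} extend across the boundary faces. Your final step — ``upgrade continuity to smoothness via the uniqueness of equivariant symplectic extensions of densely defined equivariant symplectomorphisms'' — is not an available general principle and is precisely where the work lies. Delzant's actual uniqueness proof is cohomological: he encodes the difference between any $(M,\omega,F)$ over $\Delta$ and the model $M_\Delta$ as a class in a \v{C}ech $H^1$ of a sheaf over $\Delta$ (roughly, the sheaf of local Lagrangian sections modulo the period lattice), and then uses contractibility of $\Delta$ to kill that cohomology. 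Alternatively one can run an induction over the face stratification with local uniqueness at each face (Lerman--Tolman style), but either way the passage from local models to a global isomorphism requires one of these mechanisms and cannot be waved through as an extension argument.
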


Theorem~\ref{del} was generalized in \cite{PeVN09,PeVN11} to a class
of systems $f_1,f_2$ on four dimensional manifolds, called
\emph{semitoric systems}, in which only $f_1$ is required to generate
a periodic flow.

\begin{definition} \label{qi} A \emph{quantum integrable system} is
  given by a collection of $n$ commuting semiclassical self\--adjoint
  operators
  \[
  T_1:=(T_{1,\h})_{\h\in I},\,\ldots,\, T_n:=(T_{n,\h})_{\h \in I}
  \]
  whose principal symbols form a classical integrable system on $M$.
\end{definition}

\begin{definition}
  A quantum integrable system $T_1,\, \ldots,\, T_n$ on
  $(M,\, \omega)$ is \emph{toric} if the principal symbols of
  $T_1,\, \ldots,\, T_n$ are a toric system.
\end{definition}

\begin{remark}
  It is known that not every symplectic manifold has a complex
  structure or a prequantum line bundle. In the case of toric
  integrable systems, the situation is better. A toric integrable
  system does admit a compatible complex structure, which is however
  not unique. Suppose that $F:M\to\RM^n$ is the momentum map and
  let $$\Delta:=F(M)$$ be its image in $\mathbb{R}^n$, which is a
  convex polytope (by Theorem~\ref{cos}), say with
  vertices $$p_1,\ldots,p_k.$$ Then the system is is prequantizable if
  and only if there is a constant $\ell \in\RM^n$ such
  that $$p_1+\ell,\ldots,p_k+\ell \in 2 \pi \Z^n.$$ If this holds, the
  prequantum line bundle is in fact is unique, up to isomorphisms.
\end{remark}

In the case of Berezin-Toeplitz quantization, it is remarkable that
the joint spectrum of a toric system can be completely described, as
follows.  
\begin{theorem}[\cite{ChPeVN13}] \label{above}
  \label{theo:inverse-spectral}
  Let $T_1,\,\ldots,T_n$ be a Berezin-Toeplitz quantum toric system on
  a closed manifold $M$. Then
  $$
  \textup{JointSpec}(T_1,\dots, T_n) = g \Bigl( \Delta\cap \biggl( v +
  \frac{2 \pi}{k} \mathbb{Z}^n \biggr) ;\, k \Bigr) +
  \mathcal{O}(k^{-\infty})
  $$
  where:
  \begin{itemize}
  \item the set $\Delta \subset \mathbb{R}^n$ is the convex polytope
    $F(M)$ in the Atiyah\--Guillemin\--Sternberg theorem
    (Theorem~\ref{cos});
  \item the point $v \in \mathbb{R}^n$ is any vertex of $\Delta$;
  \item $g(\cdot;k):\R^n\to\R^n$ admits a
    ${\rm C}^{\infty}$\--asymptotic expansion
   $$
   g(\cdot;\,k) = \textup{Id}+k^{-1}g_1+k^{-2}g_2+\cdots
  $$
  where each $g_j:\R^n\to\R^n$ is a smooth function.
\end{itemize}
Moreover, if the spectral parameter $k$ is large enough then the
multiplicity of the eigenvalues of
  $$\textup{JointSpec}(T_1,\dots, T_n)$$ is precisely equal to $1$, and
  and there is $\delta>0$ such that if $\nu$ is an eigenvalue then the
  ball ${\rm B}(\nu, \delta/k)$ centered at $\nu$ or radius $\delta/k$
  contains precisely only the eigenvalue $\nu$.
\end{theorem}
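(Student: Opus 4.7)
The strategy I would pursue has three stages, combining the abstract convergence from Theorem~\ref{theo:main} with the Delzant classification (Theorem~\ref{del}) and an explicit model computation.

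\emph{Stage 1: location of the joint spectrum.} The principal symbols $(f_1,\ldots,f_n)$ form a toric integrable system $F\colon M\to\R^n$, so by Atiyah--Guillemin--Sternberg (Theorem~\ref{cos}) its image $\Delta=F(M)$ is a rational convex polytope. Since Berezin--Toeplitz operators satisfy Axiom (Q\ref{item:square}) by Proposition~\ref{prop:major-examples}, Theorem~\ref{theo:main} applies and gives that the limit set of $\textup{JointSpec}(T_1,\ldots,T_n)$ is exactly $\Delta$. This pins down the macroscopic picture but not yet the lattice-like fine structure.

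\emph{Stage 2: model computation.} I would then use Delzant's theorem to identify $(M,\omega,F)$, up to isomorphism of toric systems (Definition~\ref{toriciso}), with the canonical symplectic toric manifold $(M_\Delta,\omega_\Delta,F_\Delta)$ built from $\Delta$; prequantizability selects the relevant shift $v$ via the integrality condition on the vertices discussed in the preceding remark. On $M_\Delta$, the torus $\R^n/\Z^n$ acts by prequantum bundle automorphisms on $\mathcal{L}^k$, so $\mathcal{H}_k=H^0(M_\Delta,\mathcal{L}^k)$ decomposes as an orthogonal sum of weight spaces indexed by the lattice points of $\Delta\cap(v+\tfrac{2\pi}{k}\Z^n)$. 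The Kostant--Souriau formula~\eqref{eq:KS} then shows that each Toeplitz operator $T_{j,\hbar}$ restricted to the weight space at $\alpha$ acts as a scalar whose principal part is the $j$th coordinate of $\alpha$. This produces the joint spectrum of the model system exactly as $\Delta\cap(v+\tfrac{2\pi}{k}\Z^n)$, up to $\mathcal{O}(k^{-1})$ corrections.

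\emph{Stage 3: transport and $\mathcal{O}(k^{-\infty})$ control.} To extract the smooth asymptotic map $g(\cdot;k)=\textup{Id}+k^{-1}g_1+k^{-2}g_2+\cdots$, I would invoke a semiclassical normal form for the commuting Toeplitz operators near the elliptic joint fibres, applied uniformly over $\Delta$. Such a normal form microlocally conjugates $(T_{1,\hbar},\ldots,T_{n,\hbar})$ to the model operators modulo $\mathcal{O}(\hbar^\infty)$; reading off its action on the joint spectrum yields the $g_j$ as smooth functions on $\R^n$, and the remainder $\mathcal{O}(k^{-\infty})$ comes from the flat tails in the normal form. The multiplicity-one statement and the $\delta/k$ spectral gap then follow for $k$ large, since these properties hold trivially for the model lattice $v+\tfrac{2\pi}{k}\Z^n$ and are preserved under the near-identity conjugation.

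The main obstacle, in my view, is Stage 3: producing an equivariant semiclassical normal form for the commuting Toeplitz operators that is valid uniformly up to and including the singular strata of $\Delta$. Interior regular fibres are handled by a Bohr--Sommerfeld type argument in action-angle coordinates, but the elliptic corners, which correspond to fixed points and lower-dimensional torus orbits, force a genuine equivariant Toeplitz normal form, and the $\mathcal{O}(k^{-\infty})$ control near the vertices of $\Delta$ is the delicate analytic point.
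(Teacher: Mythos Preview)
The paper does not prove this theorem. It is quoted from \cite{ChPeVN13} and stated here without argument, serving only as input for Corollary~\ref{nice} (whose short proof via Theorem~\ref{theo:main} and Delzant's classification \emph{is} given in the paper). There is therefore no proof in the present paper against which to compare your proposal.

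On the substance of your sketch: Stage~1 is not needed and is logically misplaced---Theorem~\ref{theo:main} yields only the closure $\Delta=\overline{F(M)}$, which is far weaker than the lattice description asserted, and \cite{ChPeVN13} predates the abstract convergence result in any case. Stages~2 and~3 are closer to the actual architecture of \cite{ChPeVN13}: the weight-space decomposition of $H^0(M,\mathcal{L}^k)$ under the torus action, combined with the Kostant--Souriau formula~\eqref{eq:KS}, does produce the leading-order lattice. But you correctly flag the real gap yourself. Your Stage~3 names the required tool---an equivariant Toeplitz normal form uniform up to the elliptic strata---without supplying it; this is precisely the analytic core of \cite{ChPeVN13}, and the $\mathcal{O}(k^{-\infty})$ remainder, the smoothness of the $g_j$, and the $\delta/k$ spectral gap all hinge on it. A Bohr--Sommerfeld argument in the interior together with a soft perturbation does not suffice near the vertices of $\Delta$.
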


As an immediate consequence of this theorem, we have the following.

\begin{corollary} \label{nice} Let $T_1,\,\ldots,T_n$ be a quantum
  toric system on a closed manifold $M$.  Then the joint spectrum of
  $T_1,\, \ldots,\, T_n$ modulo $\mathcal{O}(1/k)$ determines the
  classical integrable system given by principal symbols, up to
  isomorphisms.
\end{corollary}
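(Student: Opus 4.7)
The plan is to deduce the corollary immediately from Theorem~\ref{theo:main} combined with the Delzant classification (Theorem~\ref{del}). The reasoning is a two-step reduction: first extract the Delzant polytope from the semiclassical joint spectrum, then invoke uniqueness of the toric system given its polytope.

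Concretely, let $F = (f_1,\dots,f_n): M \to \mathbb{R}^n$ be the momentum map formed by the principal symbols of $T_1,\dots,T_n$, which by hypothesis defines a toric integrable system. Since $M$ is closed, the image $F(M)$ is compact and equal to its closure, so by Theorem~\ref{theo:main} applied with $\hbar = 1/k$, the limit set of $\textup{JointSpec}(T_{1,\h},\dots,T_{n,\h})$ is exactly the classical spectrum $\mathcal{S} = F(M)$. By Theorem~\ref{cos}, this set is the Delzant polytope $\Delta = F(M)$. The qualifier ``modulo $\mathcal{O}(1/k)$'' in the statement is harmless: two families of subsets of $\mathbb{R}^n$ that differ by an $\mathcal{O}(1/k)$ Hausdorff perturbation have the same limit set, so any collection of joint spectra that agrees with the true joint spectra up to $\mathcal{O}(1/k)$ still determines $\Delta$.

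Once $\Delta$ has been extracted, Delzant's theorem (Theorem~\ref{del}) states that $(M,\omega,F)$ is determined, up to isomorphism of toric systems in the sense of Definition~\ref{toriciso}, by the polytope $\Delta$. Hence the classical integrable system given by the principal symbols is recovered up to isomorphism, proving the corollary.

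There is essentially no obstacle at this stage, as both of the ingredients are used as black boxes; the substantial work is already contained in the proof of Theorem~\ref{theo:main}, specifically in the passage from Axiom \emph{(Q\ref{item:square})} through Lemma~\ref{lem:max} to the recovery of the full (non-convex) classical spectrum, which is exactly what makes this direct proof possible without passing through the convex hull as in~\cite{PePoVN14}.
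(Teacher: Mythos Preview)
Your proof is correct and follows essentially the same route as the paper: apply Theorem~\ref{theo:main} to recover $\overline{F(M)}=\Delta$ from the joint spectrum, then invoke Delzant's theorem (Theorem~\ref{del}) to reconstruct the classical system up to isomorphism. Your additional remarks on why $F(M)$ is already closed and why the $\mathcal{O}(1/k)$ qualifier does not affect the limit set are welcome clarifications that the paper leaves implicit.
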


A quicker alternative proof of Corollary~\ref{nice} was given in
\cite{PePoVN14}, and it also follows (in the same way as therein) from
Theorem~\ref{theo:main}. Indeed, Theorem~\ref{theo:main} implies that
from the joint spectrum one can recover $\overline{F(M)}$. But we know
that $\overline{F(M)}=\Delta$ is the Delzant polytope of the
system. Since, by Delzant's theorem~\ref{del}, the polytope is enough
to reconstruct the manifold and the moment map $F$, it follows that
the joint spectrum completely determines the classical
system.\hfill\rule{1ex}{1ex}

\medskip
  
To conclude, let us mention that an interesting consequence of the
proofs of these results is that any classical toric system can be
quantized.

\begin{corollary}[\cite{ChPeVN13}] \label{above} There exists a
  quantization of any classical toric integrable system. That is,
  given a classical toric system there is a quantum toric system whose
  principal symbols are precisely those given by the classical toric
  system.
\end{corollary}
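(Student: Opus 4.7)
The approach is to construct the quantization explicitly, combining Delzant's classification (Theorem~\ref{del}) with the Kostant-Souriau quantization of the torus action on a Kähler toric manifold. Given a classical toric system $(M,\om,F)$, Theorems~\ref{cos} and~\ref{del} identify $\Delta:=F(M)$ as a Delzant polytope that determines $(M,\om,F)$ up to isomorphism. Because $\Delta$ is rational, one can find $c>0$ and $\ell\in\mathbb{R}^n$ such that the vertices of $c\Delta+\ell$ lie in $2\pi\Z^n$, so by the remark preceding Theorem~\ref{theo:inverse-spectral} the symplectic manifold $(M,c\om)$ is prequantizable with prequantum line bundle $\mathcal{L}\to M$. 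The rescaling by $c$ is harmless: the final semiclassical parameter will be $\h=1/(ck)$ for $k\in\mathbb{N}^*$, which forms a set $I\subset(0,1]$ accumulating at $0$, as required by Definition~\ref{BT}.

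Next, I would endow $M$ with a $T^n$-invariant compatible complex structure, so that $(M,\om)$ becomes a toric Kähler manifold. The Hamiltonian $T^n$-action on $M$ with momentum map $F$ lifts to a unitary action on each tensor power $\mathcal{L}^k$ by prequantum bundle automorphisms, and this lifted action preserves the finite-dimensional space $\Hilbert_\h=H^0(M,\mathcal{L}^k)$ of holomorphic sections. Its infinitesimal generators, obtained by applying the Kostant-Souriau formula~\eqref{eq:KS} to the component functions of $F$ and restricted to $\Hilbert_\h$, are $n$ self-adjoint operators $T_{1,\h},\dots,T_{n,\h}$ that commute exactly because $T^n$ is abelian. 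On holomorphic sections each $T_{j,\h}$ differs from the Berezin-Toeplitz operator $\Pi_\h f_j\Pi_\h$ of Definition~\ref{tp} only by an $\mathcal{O}(\h)$ subprincipal correction, so $T_{j,\h}$ is a semiclassical Berezin-Toeplitz operator with principal symbol $f_j$ once one divides by $c$ to undo the rescaling. The resulting family $(T_{1,\h},\dots,T_{n,\h})_{\h\in I}$ is then a quantum toric system whose principal symbols coincide with $f_1,\dots,f_n$.

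The main obstacle is the symbolic comparison between the Kostant-Souriau and Berezin-Toeplitz quantizations on $\Hilbert_\h$: one must verify that the exactly commuting operators built from the lifted $T^n$-action fit the abstract framework of Definitions~\ref{tp} and~\ref{BT} and carry the prescribed principal symbols. This step is nontrivial and ultimately rests on the now-standard microlocal analysis of the Szegő/Bergman projector cited in Proposition~\ref{prop:major-examples}(1), which guarantees the $\mathcal{O}(\h)$ agreement of the two quantization schemes on a compact Kähler manifold. A minor bookkeeping point is the tracking of the preliminary rescaling by $c$, so that the final principal symbols come out to be precisely $f_1,\dots,f_n$ rather than $cf_1,\dots,cf_n$, and so that the semiclassical family is indexed by the correct set $I$.
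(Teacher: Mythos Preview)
The paper does not supply its own proof of this corollary; it merely cites \cite{ChPeVN13} and describes the statement as ``an interesting consequence of the proofs of these results.'' Your outline is in fact the construction carried out in \cite{ChPeVN13}: equip the Delzant manifold with its invariant K\"ahler structure, lift the $T^n$-action to the prequantum bundle, let the Kostant--Souriau operators~\eqref{eq:KS} act on holomorphic sections, and check via the Toeplitz calculus that these exactly commuting operators are Berezin--Toeplitz with principal symbols $f_1,\dots,f_n$. So the overall route matches.

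There is, however, a genuine gap in your prequantization step. You claim that because $\Delta$ is ``rational'' one can find $c>0$ and $\ell\in\R^n$ with the vertices of $c\Delta+\ell$ in $2\pi\Z^n$. The Delzant condition only forces the \emph{edge directions} (equivalently, the facet normals) at each vertex to form a $\Z^n$-basis; it places no rationality constraint on the vertices themselves. For instance, $S^2\times S^2$ with symplectic areas $2\pi$ and $2\pi\sqrt{2}$ has Delzant polytope $[0,1]\times[0,\sqrt{2}]$, and no single rescaling $c$ makes all four vertices lie in a lattice. Hence $(M,c\omega)$ need not be prequantizable for any $c$, and the Berezin--Toeplitz construction cannot get started. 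In \cite{ChPeVN13} the result is proved under a standing prequantizability hypothesis (equivalently, an integrality condition on $\Delta$ after translation), which the present paper's phrasing suppresses. If you add that hypothesis---or restrict to the class of toric systems for which such $c,\ell$ exist---the remainder of your sketch is correct, and the ``main obstacle'' you flag (the $\mathcal{O}(\hbar)$ comparison between Kostant--Souriau and Berezin--Toeplitz operators on $\Hilbert_\hbar$) is exactly what the Toeplitz microlocal analysis cited around Proposition~\ref{prop:major-examples}(1) provides.
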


We do not know whether a similar statement holds for more general
classes of completely integrable systems. In the analytic case, an
algebraic obstruction was constructed in~\cite{GaVa2010}.

\vspace{2mm}

\clearpage

\emph{Acknowledgements.}  We thank Y. Le Floch and J. Palmer for
comments on a preliminary version, and L. Polterovich for useful
discussions. Part of this paper was written while the second author
was holding the Lebesgue Chair at the Lebesgue Center, during the
Thematic Semester in Analysis and PDEs.

A.P. is partially supported by NSF CAREER grant DMS-1518420, Lebesgue
Chair 2015, and Severo Ochoa grant Sev-2011-0087.

V.N.S.  is partially supported by the Institut Universitaire de France
and the Lebesgue Center (ANR Labex LEBESGUE).

  \bigskip

  \noindent\'Alvaro Pelayo\\
  Department of Mathematics\\
  University of California, San Diego \\
  9500 Gilman Drive \#0112\\
  La Jolla, CA 92093-0112, USA\\
  alpelayo@math.ucsd.edu \\

  \medskip

  \noindent San V\~u Ng\d oc\\
  Institut de Recherches Math\'ematiques de Rennes\\
  Universit\'e de Rennes 1\\
  Campus de Beaulieu\\
  F-35042 Rennes cedex, France\\
  san.vu-ngoc@univ-rennes1.fr\\


\begin{thebibliography}{C}


\bibitem{arnold} V. I. Arnold. A theorem of Liouville concerning
  integrable problems of dynamics.  \emph{Siberian Math. J} {\bf 4}
  (1963) 471\--474.




\bibitem{At82}M. Atiyah. Convexity and commuting Hamiltonians.  {\em
    Bull. London Math. Soc.} {\bf 14} (1982) 1--15.


\bibitem{BorPauUri} D. Borthwick, T. Paul and A. Uribe.
  {Semiclassical spectral estimates for {T}oeplitz operators}.
  \emph{Ann. Inst. Fourier (Grenoble)} {\bf 48} (1998) 1189--1229.


\bibitem{BG81} L. Boutet de Monvel and V. Guillemin.  \newblock The
  Spectral Theory of {T}oeplitz Operators.  \newblock Annals of
  Mathematics Studies, 99. Princeton University Press, Princeton, NJ;
  University of Tokyo Press, Tokyo, 1981. v+161 pp.





\bibitem{ChPeVN13} L. Charles, \'A. Pelayo, and S. V\~{u} Ng\d{o}c.
  Isospectrality for quantum toric integrable systems (Dedicated to
  Peter Sarnak on his 60th Birthday), \emph{Annales
    Sci. Ec. Norm. Sup.} {\bf 43} (2013) 815--849.


\bibitem{CdV} Y. Colin de Verdi{\`e}re. Spectre conjoint
  d'op{\'e}rateurs pseudo-diff{\'e}rentiels qui commutent. II.  Le cas
  int{\'e}grable, \emph{Math. Z.}  {\bf 171} (1980) 51--73.

\bibitem{CdV2} Y.  Colin de Verdi{\`e}re. Spectre conjoint
  d'op{\'e}rateurs pseudo-diff{\'e}rentiels qui commutent. I.  Le cas
  non int{\'e}grable, \emph{Duke Math. J.} {\bf 46} (1979) 169--182.


\bibitem{CdVGu11} Y. Colin de Verdière and V. Guillemin: A
  semi-classical inverse problem I: Taylor expansions. Geometric
  aspects of analysis and mechanics, 81--95, Progr. Math., 292,
  Birkhäuser/Springer, New York, 2011.


\bibitem{Ch2003b} L. Charles. Berezin-Toeplitz operators, a
  semi-classical approach, \emph{Comm. Math. Phys.} {\bf 239} (2003)
  1--28.


\bibitem{Ch2006b} L. Charles. Symbolic calculus for Toeplitz operators
  with half-forms, \emph{Journal of Symplectic Geometry} {\bf 4}
  (2006) 171--198.
    
\bibitem{Ch2006a} L. Charles. Toeplitz operators and Hamiltonian Torus
  Actions, {\em Journal of Functional Analysis} {\bf 236} (2006)
  299--350.


\bibitem{Ch2007} L. Charles. Semi-classical properties of geometric
  quantization with metaplectic correction, \emph{Comm. Math. Phys.}
  {\bf 270} (2007) 445--480.




\bibitem{De88}T. Delzant. Hamiltoniens p{\'e}riodiques et image
  convexe de l'application moment. {\em Bull. Soc. Math. France} {\bf
    116} (1988) 315--339.


\bibitem{DiSj99} M. Dimassi, J. Sj\"ostrand. Spectral asymptotics in
  the semi-classical limit.  London Mathematical Society Lecture Note
  Series, 268. Cambridge University Press, Cambridge, 1999. xii+227
  pp. I




\bibitem{Du1980} J.J. Duistermaat. On global action-angle variables.
  \emph{Comm. Pure Appl. Math.} {\bf 33} (1980) 687--706.


\bibitem{Duistermaat_index} J.J. Duistermaat. {\em The heat kernel
    Lefschetz fixed point formula for the spin-c Dirac operators.}
  Reprint of the 1996 edition.  Modern Birkh{\"a}user
  Classics. Birkh{\"a}user/Springer, New York, 2011.




\bibitem{Du2004} J.J. Duistermaat. Principal Fiber Bundles.  Notes for
  \emph{Spring School, June 17--22, 2004, Utrecht}.
  
 
  
\bibitem{El90} L.H. Eliasson. \newblock Normal forms for hamiltonian
  systems with {P}oisson commuting integrals -- elliptic case.
  \newblock {\em Comment. Math. Helv.} {\bf 65} (1990) 4--35.



\bibitem{El84} L.H. Eliasson. {\em Hamiltonian systems with {P}oisson
    commuting integrals}, PhD thesis, University of Stockholm, 1984.

\bibitem{GaVa2010} M. Garay and D. van Straten: Classical and quantum
  integrability, \emph{Mosc. Math. J.} {\bf 10} (2010) 519--545.

\bibitem{GuPa09} V. Guillemin and T. Paul. Some Remarks about
  Semiclassical Trace Invariants and Quantum Normal Forms,
  \emph{Comm. Math. Phys.} {\bf 294} (2009) 1--19.



\bibitem{GuSt82}V. Guillemin and S. Sternberg. Convexity properties of
  the moment mapping. {\em Invent. Math.} {\bf 67} (1982) 491--513.

\bibitem{He09} H. Hezari. Inverse spectral problems for Schr\"odinger
  operators.  \emph{Comm. Math. Phys.} (2009)1061--1088.


\bibitem{Ka66} M. Kac. Can one hear the shape of a drum? (Polish)
  Translated from the English (\emph{Amer. Math. Monthly} {\bf 73}
  (1966), no. 4, part II, 1--23).  \emph{Wiadom. Mat.} (2) {\bf 13}
  (1971) 11--35.

\bibitem{LF14} Y. Le Floch. \emph{Inverse spectral theory for 1D
    Toeplitz operators}. PhD Thesis, University Rennes 1, 2014.



\bibitem{LFPeVN15} Y. Le Floch, {\'A}. Pelayo and S.  V\~u Ng\d oc.
  \newblock Semiclassical inverse spectral theory for
  Jaynes--Cummings type systems, \newblock \emph{Math. Annalen}, in
  press

\bibitem{LFPe15b} Y. Le Floch and {\'A}. Pelayo. Spectral asymptotics
  of semiclassical unitary operators, 42 pages,
  \texttt{arXiv:1506:02873}.

\bibitem{MaMar2008} X. Ma and G. Marinescu.  \newblock Toeplitz
  operators on symplectic manifolds.  \newblock \emph{J. Geom. Anal.},
  {\bf 18} (2008) 565--611.


\bibitem{mineur} H. Mineur: Sur les syst{\`e}mes m{\'e}caniques dans
  lesquels figurent des param{\`e}tres fonctions du temps. {\'E}tude
  des syst{\`e}mes admettant n int{\'e}grales premi{\`e}res uniformes
  en involution. Extension {\`a} ces syst{\`e}mes des conditions de
  quantification de Bohr-Sommerfeld. \emph{J. Ecole Polytechn.}, III
  (1937) (Cahier 1, Fasc.  2 et 3):173--191, 237--270.


\bibitem{Pe2} \'A. Pelayo: Symplectic spectral geometry of
  semiclassical operators. \emph{Bull. Belg. Math. Soc. Simon Stevin}
  {\bf 20} (2013) 405--415.
 
 

\bibitem{PePoVN14} \'A. Pelayo, L. Polterovich, and S. V\~u Ng\d oc.
  Semiclassical quantization and spectral limits of
  $\hbar$--pseudodifferential and Berezin-Toeplitz operators,
  \emph{Proc. Lond. Math. Soc.} {\bf 109} (2014) 676--696.


\bibitem{PeVN09} {\'A}. Pelayo and S.  V\~u Ng\d oc.  \newblock
  Semitoric integrable systems on symplectic $4$--manifolds.
  \newblock \emph{Invent. Math.} {\bf 177} (2009) 571--597.




\bibitem{PeVN11} {\'A}.~Pelayo and S.~V{\~u}~Ng{\d{o}}c.  \newblock
  Constructing integrable systems of semitoric type.  \newblock {\em
    Acta Math.} {\bf 206} (2011) 93--125.




\bibitem{PeVNCMP14} \'A. Pelayo and
  S.~V{\~u}~Ng{\d{o}}c. Semiclassical inverse spectral theory for
  singularities of focus-focus type, \emph{Comm. Math. Phys.} {\bf
    329} (2014) 809--820.

\bibitem{Schli2010} M. Schlichenmaier.  \newblock Berezin-{T}oeplitz
  quantization for compact {K}\"ahler manifolds. {A} review of
  results.  \newblock \emph{Adv. Math. Phys.}, pages Art. ID 927280,
  38, 2010.


\bibitem{Sj1} J. Sjo\"strand. Some results on nonselfadjoint
  operators: a survey. Further progress in analysis, 45--74, World
  Sci. Publ., Hackensack, NJ, 2009.


\bibitem{Sj2} J. Sjo\"strand. Spectral properties of
  non--self--adjoint operators. arxiv:1001.4844v1, 118 pages, 2010.

\bibitem{Tr} L. N. Trefethen and M. Embree.  \emph{Spectra and
    pseudospectra.  The behavior of nonnormal matrices and
    operators}. Princeton University Press, 2005.

\bibitem{VN11} S. V\~u Ng\d oc. Symplectic inverse spectral theory for
  pseudodifferential operators. Geometric aspects of analysis and
  mechanics, 353--372, Progr. Math., 292, 2011.

\bibitem{VN00} S. V\~u Ng\d oc. Bohr-Sommerfeld conditions for
  integrable systems with critical manifolds of focus-focus
  type. Comm. Pure Appl. Math., vol. 53, no. 2, pp. 143--217, 2000.

\bibitem{We1911} H. Weyl. {\"U}ber die asymptotische Verteilungder
  Eigenwerte, \emph{Gott. Nach.} (1911) 110--117.

\bibitem{We1912} H. Weyl. Das asymptotische Verteilungsgesetz der
  Eigenwerte linearer partieller Differentialgleichungen,
  \emph{Math. Ann.} {\bf 71} (1912) 441--479.


\bibitem{Zw2012} M.~Zworski.  \newblock {\em Semiclassical analysis},
  volume 138 of {\em Graduate Studies in Mathematics}.  \newblock
  American Mathematical Society, Providence, RI, 2012.

\end{thebibliography}
\end{document}